\newcommand{\remove}[1]{}
\newcommand{\no}{\nonumber}
\newcommand{\pr}[1]{\mathbb{P}\left\{ #1 \right\}}
\newcommand{\EXP}[1]{\mathbb{E}\!\left[#1\right] }
\newcommand{\VAR}[1]{\mathsf{VAR}\!\left(#1\right) }
\newtheorem{theorem}{Theorem}[section]
\newtheorem{corollary}[theorem]{Corollary}
\newtheorem{lemma}[theorem]{Lemma}
\newtheorem{proposition}[theorem]{Proposition}
\newtheorem{remark}[theorem]{\bf Remark}
\newcommand{\1}{{\bf 1}}
\newcommand{\bi}{{\bf i}}
\newcommand{\R}{\mathbb{R}}
\newcommand{\N}{\mathbb{N}}
\newcommand{\ER}{Erd\"{o}s-R\'enyi}
\newcommand{\reg}{\text{reg}}
\title{Edge ideals of \ER \, random graphs : Linear resolution, unmixedness and regularity.}
\author{ Arindam Banerjee\footnote{Department of Mathematics, Ramakrishna Mission Vivekananda Educational and Research Institute, 123.arindam@gmail.com} \, and  \, D. Yogeshwaran \footnote{Theoretical Statistics and Mathematics Unit, Indian Statistical Institute Bangaluru, India. d.yogesh@isibang.ac.in}}
\begin{document}

\maketitle 

\begin{abstract}
We study the homological algebra of edge ideals of \ER \, random graphs. These random graphs are generated by deleting edges of a complete graph on $n$ vertices independently of each other with probability $1-p$.  We focus on some aspects of these random edge ideals - linear resolution, unmixedness and algebraic invariants like the Castelnuovo-Mumford regularity,  projective dimension and depth.  We first show a double phase transition for existence of linear presentation and resolution and determine the critical windows as well. As a consequence,  we obtain that except for a very specific choice of parameters (i.e., $n,p := p(n)$), with high probability, a random edge ideal has linear presentation if and only if it has linear resolution. This shows certain conjectures hold true for large random graphs with high probability even though the conjectures were shown to fail for determinstic graphs.  Next,  we study asymptotic behaviour of some algebraic invariants - the Castelnuovo-Mumford regularity,  projective dimension and depth -  of such random edge ideals in the sparse regime (i.e., $p = \frac{\lambda}{n}, \lambda \in (0,\infty)$).  These invariants are studied using local weak convergence (or Benjamini-Schramm convergence) and relating them to invariants on Galton-Watson trees.  We also show that when $p \to 0$ or $p \to 1$ fast enough, then with high probability the edge ideals are unmixed and for most other choices of $p$,  these ideals are not unmixed with high probability. This is further progress towards the conjecture that random monomial ideals are unlikely to have Cohen-Macaulay property \cite{PetHa2017,deLoera2019average} in the setting when the number of variables goes to infinity but the degree is fixed.  
	
\end{abstract}

\noindent
{\bf Keywords:} Edge ideals, \ER \, random graphs, chordality, linear resolution, unmixedness,  regularity,  projective dimension,  depth.  

\vspace{0.1cm}
\noindent
{\bf AMS MSC 2010:} 05C80 ; 
 05E40 ; 
 13F55 ; 

\section{Introduction}
\label{s:intro}

\hspace{0.1 in}   
Square-free monomial ideals and Stanley-Reisner ideals of flag complexes have emerged as two important subtopics within combinatorial commutative algebra \cite{Stanley2007,Miller2004}.  In the last decade, a specific class of square-free monomial ideals called {\em the edge ideals} have garnered significant attention (see \cite{van2013}). These ideals are generated by edges of a simple graph and various results about these edge ideals displaying the interplay between the algebraic and the combinatorial properties have been proven in the recent years. In this article, we demonstrate that bringing probabilistic angle to this algebra combinatorics interplay has the potential of generating many interesting results and shedding new light.  \\

Motivated by the success of probabilistic methods in the study of combinatorial structures, especially in the now classical subject of random graphs \cite{Frieze2016} and more recently in random simplicial complexes \cite{Kahle14survey},  it is natural to bring these techniques to combinatorial commutative algebra. One particular advantage of bringing in probabilistic techniques to the study of edge ideals is to answer questions about typical or predominant behaviour of edge ideals of large graphs. Often, one only understands behaviour of a sub-class of edge ideals but with probabilistic ideas, one might be able to understand typical behaviour of edge ideals on large graphs or behaviour of edge ideals on most large graphs. However, there have been very few studies in this direction. Recently, Erman and Yang \cite{Erman17} investigated graded Betti numbers of Stanley Reisner ideals of random flag complexes. They have demonstrated certain Betti numbers satisfy some asympotic convergence result when the number of vertices go the infinity. Further, the related asymptotic syzygies have been observed to have various interesting properties. As many other important algebraic invariants like regularity, depth and projective dimension are intimately related with Betti numbers, one expectedly gets certain asymptotics for these too. In another set of recent works \cite{PetHa2017,deLoera2019average,silverstein2020asymptotic},  a different model of random monomial ideals was investigated in detail. Here for asymptotics they study behaviors of different homological invariants when the degree of the generators  (instead of the number of generators) go to infinity. In particular, the Hilbert functions, Krull dimensions and graded Betti numbers of random monomial ideals are studied and interestingly, it is shown that for most choices of parameters the random monomial ideal is not Cohen-Macaulay with high probability.\\

In this article, we investigate edge ideals of \ER \, random graphs.  Edge ideals are an important class of monomial ideals that are more tractable due to their graph-theoretic connections. Algebraically speaking edge ideals cover the same class of ideal as the Stanley-Reisner ideals of flag complexes, namely the squarefree quadratic monomial ideals.  One of our main contributions (Theorem \ref{t:reginp})  is the introduction of local weak convergence and random trees in the study of random edge ideals.  It is known that large \ER \, random graphs with bounded average degree ``locally resemble" a Galton-Watson tree and since homological invariants of the edge ideals of trees are comparatively well understood,  this allows us to prove laws of large numbers for Castelnuovo-Mumford regularity and projective dimension (and as an immdediate consequence for depth due to the Auslander-Buchbaum Theorem).  This is an improvement of Corollary 5.2 in Erman and Yang \cite{Erman17}.  More detailed comparisons with literature are pointed out after the respective theorem statements.  \\

Another main contribution (Theorems \ref{t:lrlp} and \ref{t:lrlplambda}) of the article is determining sharp thresholds for existence of linear minimal free resolution for random edge ideals and their powers.  If an edge ideal has linear resolution so does all its powers \cite{HeHiZ2004}.  Fr\"oberg characterised edge ideals with linear resolution as those whose underlying graphs are co-chordal (i.e., the complement graph has no induced cycle other than triangles) \cite[Theorem 1.1]{Fr1990}.  Motivated by this,  it was asked by Chris Francisco, Tai Huy Ha and Adam Van Tuyl whether all powers of an edge ideal from second power onwards have linear resolution when the complement graph has no induced $4$-cycle.  Based on various Macaulay2 examples computed by Chris Francisco, answer to this question appeared affirmative.  However Eran Nevo and Irena Peeva gave a counter example to this fact \cite{NePe2013} and modified the question to add a condition on Castelnuovo-Mumford regularity for expecting the same conclusion (see \cite{NePe2013}) for the details of this direction of research).  As examples are abundant where edge ideals do not have linear resolution but all higher powers have linear resolution ($5$-cycle for example),  this question by Nevo-Peeva had drawn lots of attention and many classes of graphs where powers of edge ideals have linear resolution second power onwards.  These studies motivated us to understand the behaviour of `typical' large graphs.  \ER \, random graphs gives us a natural model for such a study.  We show that `typical' edge ideals behave even better (edge ideals already behave better than general monomial ideals as mentioned above).  That is to say that,  as the number of vertices go to infinity the asymptotic probability of an edge ideal having linear resolution and linear presentation (which is equivalent to having no $4$-cycle in the complement) are equal.  This shows that not only does the Nevo-Peeva question has a positive answer with high probability in the random set up,  the original question of Francisco-Ha-Van Tuyl also has an affirmative answer in the random set up.  Apart from linear presentation and resolution,  asymptotics of algebraic invariants (regularity,  projective dimension and depth),  we also study unmixedness of random edge ideals (Theorem \ref{t:unmixed_Inp}).  Here,  we show that for most choice of parameters the random edge ideal is not unmixed with high probability.  More specifically,  our results indicate that random edge ideals should be unmixed if and only if $p \to 0$ or $p \to 1$ fast enough. \\

Our proofs use a range of probabilistic ideas such as sharp thresholds for containment of subgraphs, Poisson approximation for cycle counts,  concentration inequality for near-Lipschitz functionals and local weak convergence (i.e., Benjamini-Schramm convergence) theory.  From the combinatorial commutative algebra side,  we use various combinatorial characterizations of algebraic properties of the edge ideals. as well as prove some polynomial Lipschitz properties for algebraic invariants.  En route to our results,  we also prove a new result in random graph theory,  namely that of determining the critical windows for existence of $4$-cochordality and cochordality.  The relation of these properties to local cochordality and local $4$-cochordality are also investigated.  Also, our work gives strong motivation for further studies in random graphs, especially on induced matching number and minimal vertex covers.  Our proof of asymptotics of regularity and projective dimension in the sparse regime emphasizes the need to understand other algebraic invariants (for example,  regularity and projective dimension for higher powers of the ideal,  Betti numbers) on trees or their behaviour under vertex deletions in arbitrary graphs.  Our proof techniques also apply to study of edge ideals on other models of random graphs and could also be useful in study of more general random monomial ideals.  We refer the interested reader to the end of Section  \ref{s:intro} for more on further questions for research.   \\

In recent years, topological invariants of random simplicial complexes have received a lot of attention (see \cite{Kahle14survey}) but study of algebraic invariants of random graphs or simplicial complexes is still in its infancy. We hope that our work will complement the recent works of  \cite{Erman17,PetHa2017,deLoera2019average,booms2020heuristics,
silverstein2020asymptotic} on random monomial ideals and lead to a more fruitful interaction between probability and combinatorial commutative algebra. As expected our results complement well those of \cite{Erman17}. We study edge ideals of random graphs but \cite{Erman17} studies the Stanley Reisner ideals of random flag complexes, which are also square-free quadratic monomial ideals. The main difference between our approach and that of \cite{Erman17}  is that they relate algebraic invariants to the combinatorial properties of the flag complex but we relate the algebraic invariants to graph properties. These two are connected by the minimal vertex cover problem which is extremely hard. We also wish to clarify an important point of difference between the various studies. While \cite{Erman17,booms2020heuristics} study the case where the number of variables increases to infinity but  \cite{PetHa2017,deLoera2019average,silverstein2020asymptotic} study the case where the degree of the generators increases to infinity. The former approach is more natural from the viewpoint of random graph theory and this is the one we shall be taking in this article. \\

The rest of the article is organized as follows : In the following three subsections, we shall state our main results on linear resolution (Section \ref{sec:lplr}), algebraic invariants (Section \ref{s:reg}) and unmixedness (Section \ref{s:unmix}) as well as discuss them in the context of existing literature.  We provide the necessary algebraic, combinatorial and probabilistic preliminaries in Section \ref{sec:prelims}. The three subsections of Section \ref{sec:EIRG} shall each respectively prove the results stated in the next three subsections. 
 
 \subsection{Our Results} First, we quickly introduce edge ideals of \ER \, random graphs. For a finite simple (undirected) graph $G$ with vertex set $[n] := \{1.\ldots,n\}$ and edge set $E(G)$, we define the edge ideal as 
\begin{equation}
\label{d:ei}
I(G) := (x_ix_j \mid (i,j) \in E(G)) \subset K[x_1,\ldots,x_n],
\end{equation}
where $K[x_1,\ldots,x_n]$ denotes the polynomial ring over a fixed field $K$.\\

We shall now  introduce the \ER \, random graph, the simplest model for random graphs (undirected). The \ER \, random graph is denoted by $G(n,p)$ where $n \geq 1$ and $p \in [0,1]$. The vertex set of $G(n,p)$ is $[n] := \{1,\ldots,n\}$ and each pair $i,j$ ($i,j \in [n], i \neq j$) is an edge in $G(n,p)$ with probability $p$ and independent of other edges. More precisely, given a graph $G$ with vertex set as $[n]$ and having exactly $m$ edges, 
\begin{equation}
\label{d:erg}
\pr{G(n,p) = G} =  p^m(1-p)^{\binom{n}{2}-m}.
\end{equation}
Thus, every graph with $m$ edges has the same probability of being selected. The edge set is denoted by $E(n,p)$. When $p > 1$, by $G(n,p)$ we mean $G(n, \min\{p,1\})$. Often one studies properties of $G(n,p)$ as $n \to \infty$ and $p$ also varies with $n$. It is customary to write $p$ even though one is to understand that $p := p(n)$. By properties of $G(n,p)$, we mean graph properties i.e., those properties of graphs that are invariant under isomorphims. Further, we shall often say that for some specified sequence $p := p(n)$, $G(n,p)$ has the property $\mathcal{P}$ with high probability (or w.h.p. in short) if $\pr{\mbox{$G(n,p)$ has property $\mathcal{P}$}} \to 1$ as $n \to \infty$. For further definitions and unexplained notions in the rest of the subsection, we refer the reader to Section \ref{sec:prelims}. We shall always abbreviate the random edge ideal $I(G(n,p))$ by $I(n,p)$ for convenience.

\subsubsection{Critical windows for linear presentation and resolution of $I(n,p)$}
\label{sec:lplr}
 We first show a double phase transition for existence of  linear presentation and resolution for $I(n,p)$ as well as characterize the critical windows. Since linear presentation and resolution are not monotonic functionals of the underlying graphs, it is firstly not obvious that they exhibit a phase transition and let alone, a double phase transition. Further, we shall compute the exact probabilities for existence of linear presentation and resolution asymptotically in all the cases and surprisingly barring one very specific choice of parameters, the probabilities for the two coincide.  An important consequence of our two theorems below is that barring one particular choice of parameters $n,p$, w.h.p. $I(n,p)$ has linear resolution whenever it has linear presentation.  Our results also imply that w.h.p. all powers of $I(n,p)$ have linear resolution whenever they have linear presentation \cite{HeHiZ2004} (see Question 1.9 and Counter Example 1.10 of \cite{NePe2013}). Such a phenomenon only holds for bipartite graphs in the ``deterministic'' set up and is known to be false for general graphs.  In fact, it fails for the $5$-cycle itself.   A conjecture by Nevo and Peeva states that if $I(G)$ has regularity less than or equal to three and linear presentation then all higher powers have linear resolutions \cite[Open Problem 1.11(2)]{NePe2013}. This is known to be false without the first condition. Our two theorems below imply that the conjecture holds asymptotically for almost all graphs except for a very specific choice  of $n,p$.  We also discuss later the connection between local linear presentation and linear resolution (see Proposition \ref{p:lcchord} and Remark \ref{r:lcchordcchord}). 

Here is our first theorem that formalizes much of what was discussed above. 
\begin{theorem}
\label{t:lrlp}
$$ \lim_{n \to \infty} \pr{\mbox{$I(n,p)$ has linear resolution}} = 
\lim_{n \to \infty} \pr{\mbox{$I(n,p)$ has linear presentation}} =
\begin{cases} 
1 \, \, \, \, \mbox{if \, $(n(1-p))^4p^2 \to 0$}, \\
0 \, \, \, \, \mbox{if \, $(n(1-p))^4p^2 \to \infty$}.\\
\end{cases}
 $$
Further, if $p := p(n)$ is a sequence such that $\lim_{n \to \infty} n(1-p)  \in \{0,\infty\}$, then we have that
$$\lim_{n \to \infty} \pr{\mbox{$I(n,p)$ doesn't have a linear resolution but has a linear presentation}} = 0.$$
\end{theorem}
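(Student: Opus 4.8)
The plan is to pass to the complement graph $\bar G = G(n,q)$ with $q := 1-p$ and use the combinatorial characterizations recorded in Section \ref{sec:prelims}: by Fr\"oberg's theorem \cite{Fr1990}, $I(n,p)$ has a linear resolution iff $\bar G$ is chordal, i.e.\ has no induced cycle $C_k$ with $k\ge 4$; and $I(n,p)$ has a linear presentation iff $\bar G$ has no induced $C_4$. Thus $\{\text{linear resolution}\}\subseteq\{\text{linear presentation}\}$, and writing the threshold quantity as $(n(1-p))^4p^2 = (nq)^4(1-q)^2$, the first display reduces to showing: (a) $\bar G$ is chordal w.h.p.\ when $(nq)^4(1-q)^2\to 0$, which forces \emph{both} probabilities to $1$; and (b) $\bar G$ has an induced $C_4$ w.h.p.\ when $(nq)^4(1-q)^2\to\infty$, which forces both to $0$. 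Note finally that ``linear presentation but not linear resolution'' is exactly the event that $\bar G$ has no induced $C_4$ yet has some induced $C_k$ with $k\ge 5$.

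The first-moment input is the expected number of induced $k$-cycles, $\mu_k := \binom{n}{k}\tfrac{(k-1)!}{2}\,q^k(1-q)^{\binom{k}{2}-k}$, so that $\mu_4\asymp (nq)^4(1-q)^2$, together with the clean ratio identity $\mu_{k+1}/\mu_k = \tfrac{(n-k)k}{k+1}\,q\,(1-q)^{k-1} \le (nq)(1-q)^{k-1}$. This turns everything into control of a single geometric-type series. For (a): under $(nq)^4(1-q)^2\to 0$ one has $(nq)(1-q)^3 \le \big((nq)^4(1-q)^2\big)^{1/4}\to 0$, so for all $k\ge 4$ the ratio is bounded by $(nq)(1-q)^3\to 0$, whence $\sum_{k\ge 4}\mu_k \le \mu_4/\big(1-(nq)(1-q)^3\big)\to 0$. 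Markov's inequality then makes $\bar G$ chordal w.h.p.

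Part (b) is where the real work lies. I would apply the second moment method to $X := \#\{\text{induced }C_4\text{ in }\bar G\}$, which has $\E[X]\asymp \mu_4\to\infty$. Writing $X=\sum_\alpha \1_\alpha$ over the $3\binom{n}{4}$ potential induced $4$-cycles, covariances vanish unless the two vertex sets share at least two vertices; the common-vertex-set terms are non-positive, and a direct count shows that pairs sharing three vertices contribute $O(n^5q^6(1-q)^3)$ while pairs sharing two vertices contribute $O\!\big(n^6[q^7(1-q)^4+q^8(1-q)^3]\big)$ (the third summand coming from the two diagonal non-edges being identified). Dividing by $\E[X]^2\asymp n^8q^8(1-q)^4$ gives the three ratios $1/(n^3q^2(1-q))$, $1/(n^2q)$, and $1/(n^2(1-q))$. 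Since $\mu_4\to\infty$ forces $nq\to\infty$, together with $n\to\infty$ this yields $n^2q\to\infty$, $n^2(1-q)\gtrsim\sqrt{\mu_4}\to\infty$, and $n^3q^2(1-q)\asymp n\sqrt{\mu_4}\to\infty$, so each ratio tends to $0$; hence $\operatorname{Var}(X)=o(\E[X]^2)$ and $\pr{X=0}\to 0$ by Chebyshev. The main obstacle is exactly making this variance bound hold \emph{uniformly} across the regimes $q\to 0$, $q$ bounded away from $0,1$, and $q\to 1$, since the weights $q^e(1-q)^f$ of the overlap terms trade between edge-constraints and the (rare, when $q\to1$) non-edge constraints; the displayed identities are precisely what tame this. (A Stein--Chen/Poisson approximation for the $C_4$-count gives the same conclusion and additionally resolves the critical window.)

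For the Further clause I bound $\pr{\text{lin.\ pres.\ but not lin.\ res.}} \le \min\big\{\pr{\text{no induced }C_4},\ \pr{\exists\,\text{induced }C_k,\,k\ge 5}\big\}$ and argue by subsequences according to the limit of $nq(1-q)$. If $nq(1-q)\to 0$, then $\mu_5\asymp (nq(1-q))^5\to 0$ and the ratio bound (now $\le (nq)(1-q)^4=[nq(1-q)](1-q)^3\to 0$ for $k\ge 5$) gives $\sum_{k\ge 5}\mu_k\to 0$, killing the second term. If $nq(1-q)\not\to 0$ along a subsequence, then $nq\not\to 0$ there, so the hypothesis $\lim n(1-p)\in\{0,\infty\}$ forces $nq\to\infty$, whence $\mu_4=(nq)^2(nq(1-q))^2\to\infty$ and part (b) gives $\pr{\text{no induced }C_4}\to 0$, killing the first term. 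As every subsequence has a further subsequence along which $nq(1-q)$ converges in $[0,\infty]$, the probability tends to $0$ along the whole sequence. The only genuinely new content here is the bookkeeping showing that the two failure modes — proliferation of induced $C_4$'s versus appearance of an induced $C_{\ge 5}$ — cannot coexist with non-negligible probability.
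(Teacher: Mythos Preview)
Your proof is correct and follows essentially the same approach as the paper: translate to (4-)chordality of the complement $G(n,q)$, use a first-moment geometric tail bound on the induced-cycle counts $\mu_k$ for the $\to 1$ direction, and the second-moment method on induced $C_4$'s for the $\to 0$ direction, with the same case analysis of overlaps. The organization differs only cosmetically---the paper first proves the 4-chordal threshold and then separately bounds $\pr{\text{not chordal but 4-chordal}}$, whereas you prove full chordality in one stroke via $\sum_{k\ge 4}\mu_k\to 0$; and for the Further clause the paper verifies $n(1-p)p^{5/2}<1$ directly to sum the tail, while your subsequence dichotomy (either $nq(1-q)\to 0$ kills the $k\ge 5$ tail, or $\mu_4\to\infty$ kills the $C_4$-free probability via part (b)) repackages the same estimates.
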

The above theorem leaves open the case of when $(n(1-p))^4p^2 \to \lambda \in (0,\infty)$ and our next theorem shall address this case. Here the probabilities shall depend on whether $p \to 0$ or $p \to 1$ and we shall see that the two probabilities do not coincide asymptotically when $p \to 1$ and $n(1-p) \to \lambda \in (0,\infty)$. This shows that the assumption in the second statement of the above theorem is very much necessary. This is the only case where the probabilities for linear presentation and linear resolution differ. 
\begin{theorem}
\label{t:lrlplambda}
Let $(n(1-p))^4p^2 \to \lambda \in (0,\infty)$. If $p \to 0$ then we have that
\begin{equation}
\label{e:cvgprlrlp1}
\lim_{n \to \infty} \pr{\mbox{$I(n,p)$ has linear resolution}} = \lim_{n \to \infty} \pr{\mbox{$I(n,p)$ has linear presentation}} = e^{-\frac{\sqrt{\lambda}}{2}}(1 + \frac{\sqrt{\lambda}}{2}).
\end{equation}
If $p \to 1$, then we have that
\begin{equation}
\label{e:cvgprlrlp2}
\lim_{n \to \infty} \pr{\mbox{$I(n,p)$ has linear presentation}} = e^{-\lambda/8} \, \, 
 \mbox{and} \, \, \lim_{n \to \infty} \pr{\mbox{$I(n,p)$ has linear resolution}} = e^{-\sum_{k \geq 4} \frac{\lambda^{k/4}}{2k}}.
 \end{equation}
\end{theorem}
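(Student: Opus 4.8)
The plan is to pass to the complement graph $\overline{G}:=\overline{G(n,p)}$, which is distributed as the random graph $G(n,q)$ with $q:=1-p$, and to use the two combinatorial characterizations recalled earlier: $I(n,p)$ has a linear presentation if and only if $\overline{G}$ has no induced $4$-cycle, and (Fr\"oberg) $I(n,p)$ has a linear resolution if and only if $\overline{G}$ is chordal, i.e.\ has no induced $C_k$ for any $k\ge 4$. Under the hypothesis $(n(1-p))^4p^2\to\lambda$, the two cases $p\to 1$ and $p\to 0$ force $\overline{G}$ (equivalently $G(n,p)$) into sharply different regimes, and I handle them separately.

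\emph{The case $p\to 1$.} Here $q=1-p$ satisfies $nq\to\lambda^{1/4}=:c$, so $\overline{G}\sim G(n,q)$ lies in the sparse regime. I would invoke the classical Poisson approximation for cycle counts: for each fixed $k\ge 3$ the number of (not necessarily induced) $k$-cycles of $\overline{G}$ converges in distribution to $\mathrm{Poisson}(c^k/(2k))$, jointly with independent Poisson limits; moreover the expected number of $k$-cycles carrying at least one chord is $O(n^k q^{k+1})\to 0$, so the number of \emph{induced} $C_k$'s has the same limiting law. Taking $k=4$ gives $\P(\overline{G}\text{ has no induced }C_4)\to e^{-c^4/8}=e^{-\lambda/8}$, the linear presentation statement in \eqref{e:cvgprlrlp2}. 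For linear resolution, for every fixed $K\ge 4$ we have $\P(\overline{G}\text{ chordal})\le\P(\overline{G}\text{ has no induced }C_k,\ 4\le k\le K)\to\prod_{k=4}^{K}e^{-c^k/(2k)}$; letting $K\to\infty$ bounds $\limsup_n\P(\overline{G}\text{ chordal})$ above by $\exp(-\sum_{k\ge 4}c^k/(2k))=\exp(-\sum_{k\ge 4}\lambda^{k/4}/(2k))$, which is exactly the asserted value (and equals $0$ when $c\ge 1$, i.e.\ $\lambda\ge 1$, finishing that subcase). When $c<1$ the matching lower bound follows by truncation: for $n$ large $nq\le c'<1$, hence $\sum_{k>K}\E[\#\{k\text{-cycles of }\overline{G}\}]\le\sum_{k>K}(nq)^k/(2k)\le (c')^{K+1}/(1-c')$, which is negligible as $K\to\infty$ uniformly in $n$; so $\P(\overline{G}\text{ chordal})\ge\P(\text{no induced }C_k,\ 4\le k\le K)-o_K(1)$, and sending $n\to\infty$ then $K\to\infty$ gives equality.

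\emph{The case $p\to 0$.} Now $q=1-p\to 1$ and $n^2p\to\sqrt{\lambda}$, so $G(n,p)$ itself is very sparse: its number of edges $N\sim\mathrm{Binomial}(\binom{n}{2},p)$ has mean tending to $\mu:=\sqrt{\lambda}/2$, hence $N$ converges in distribution to $\mathrm{Poisson}(\mu)$, and since the expected number of paths on three vertices in $G(n,p)$ is $\Theta(n^3p^2)=\Theta(1/n)\to 0$, with high probability $G(n,p)$ is a matching (no two edges share a vertex). Translating back, $\overline{G}$ has an induced $C_4$ if and only if $G(n,p)$ has an induced $2K_2$ (two vertex-disjoint edges with no edge between them), and once $G(n,p)$ is a matching this holds exactly when $N\ge 2$; hence $\P(I(n,p)\text{ has linear presentation})=\P(\overline{G}\text{ has no induced }C_4)\to\P(N\le 1)=e^{-\mu}(1+\mu)=e^{-\sqrt{\lambda}/2}(1+\frac{\sqrt{\lambda}}{2})$. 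For linear resolution it now suffices to observe that an induced $C_k$ in $\overline{G}$ forces the $\binom{k}{2}-k$ chords of that cycle to be edges of $G(n,p)$ on only $k$ vertices; once $G(n,p)$ is a matching those chord-edges would be pairwise disjoint, which is impossible for $k\ge 5$ because $\overline{C_k}$ is $(k-3)$-regular with $k-3\ge 2$. Thus with high probability $\overline{G}$ has no induced $C_k$ for any $k\ge 5$, so the ``linear resolution'' and ``linear presentation'' events coincide up to an event of vanishing probability, which yields \eqref{e:cvgprlrlp1}.

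The main obstacle is the linear resolution statement when $p\to 1$: chordality of $\overline{G}$ is an intersection of infinitely many ``no induced $C_k$'' events, so one must rule out induced cycles of unbounded length. The upper bound is immediate (truncate at any finite $K$); for the matching lower bound in the subcritical range $c<1$ the truncation estimate above does the job, but one must still (i) make the Poisson limit for cycle counts applicable when $nq\to c$ rather than $nq=c$ exactly, and (ii) apply it simultaneously for all cycle lengths up to a growing cut-off --- both standard but requiring care. By contrast, the $p\to 0$ regime is comparatively soft once one notices that the relevant graph is with high probability a matching, so that the only surviving randomness is the Poisson-distributed edge count while longer induced cycles in the complement are excluded on purely combinatorial grounds.
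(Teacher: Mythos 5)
Your argument is correct and follows essentially the same route as the paper: pass to the complement $G(n,1-p)$ via Fr\"oberg's characterizations, use Poisson approximation for (chordless) cycle counts when $p\to 1$ together with a truncation in $K$ to handle chordality (splitting on $\lambda^{1/4}<1$ versus $\ge 1$), and when $p\to 0$ reduce to the event that $G(n,p)$ is a matching with a Poisson number of edges. The only genuine difference is in the $p\to 0$ case, where you exclude induced $C_k$'s ($k\ge 5$) in the complement by the combinatorial observation that the chord set of $C_k$ is $(k-3)$-regular and hence incompatible with $G(n,p)$ being a matching, whereas the paper instead reuses its first-moment bound $\sum_{k\ge 5}\EXP{C'_k(n,1-p)}\to 0$ from the proof of Theorem \ref{t:lrlp}; both are valid, and yours is the more self-contained of the two.
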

\begin{remark}
\label{r:ermanyangcomp1}
We now compare one of the consequences of the above theorems with those in \cite{Erman17}. We first recall that an edge ideal has linear resolution if and only if it has regularity $2$ (see \cite{NePe2013}). For an ideal $I$, we denote the Castelnevo-Mumford regularity of the ideal by $\reg(I)$. As a corollary of \cite[Theorem 1.6]{Erman17}, we have that $\reg(I(n,p)) \geq s + 2$ w.h.p. for $\frac{1}{n^{1/s}} << p << \frac{1}{n^{1/(s+1)}}$\footnote{$a_n << b_n$ means that $\frac{a_n}{b_n} \to 0$ as $n \to \infty$.}. So, if we set $s = 1$ then we obtain that $\reg(I(n,p)) > 2$ w.h.p. for $p >> \frac{1}{n}$. However, from Theorem \ref{t:lrlp}, we have that $\reg(I(n,p)) = 2$ for $p << \frac{1}{n^2}$, $\reg(I(n,p)) > 2$ for $\frac{1}{n^2} << p $ and $1 - p >> 1/n$ and $\reg(I(n,p)) = 2$ for $1 - p << 1/n$. In other words, we have exactly determined the parameter regime where $\reg(I(n,p)) = 2$ and thus significantly improving the bounds obtained using the results of \cite{Erman17}. We shall later show that $reg(I(n,p))$ grows linearly in $n$ when $p = \frac{\lambda}{n}$ for $\lambda \in (0,\infty)$. See Theorem \ref{t:reginp} and Remark \ref{r:ermanyangcomp2}.
\end{remark}
 We now describe the main proof ideas involved in the two theorems.  By Froberg's theorems (Theorems \ref{thm:Froberg} and \ref{thm:Froberg1}),  linear resolution and presentation are equivalent to  cochordality and $4$-cochordality respectively.  It is to be expected of random graphs that if $4$ cycles have a chord with high probability than so will higher order cycles.  We show the same via computing first and second moments of cycles of length $k$ without a chord in the complement and this yields the proof of Theorem \ref{t:lrlp}. The complement of $G(n,p)$ is $G(n,1-p)$ and this is of crucial use in all our proofs.  As for Theorem \ref{t:lrlplambda},  in the regime when $p \to 0$,  we know from random graph theory that the graph is nothing but a collection of disjoint edges and the distribution of the number of edges is also known.  In the regime when $p \to 1$, for all $k \geq 1$ we know the joint distribution of number of cycles of order upto $k$ in the complement graph. The limiting distribution in both the cases is the Poisson distribution with appropriate parameters and exploiting this along with some approximation for cycles without a chord,  we derive the exact probabilities in these cases.  A consequence of our proofs is that similar results as Theorem \ref{t:lrlp} and \ref{t:lrlplambda} hold for $4$-cochordality and cochordality. To the best of our knowledge,  such a phase transition result for cochordality is not available in the random graph literature.
 
\subsubsection{Algebraic invariants in the sparse regime}
\label{s:reg}

We now describe the asymptotic behaviour of some algebraic invariants in the sparse regime.   Recall that Castelnevo-Mumford regularity of the ideal is denoted by $\reg(I)$,  projective dimension by $\text{pd }I$ and depth by $\text {depth }I.$  Regularity is a measure of complexity of an ideal and it achieving the minimum possible value indicates that the ideal has linear resolution.  Projective dimension and depth are two closely related yet different measures of size (the first is the length of the minimal free resolution and the second is the maximum size of a regular sequence). For a graph $G$,  we denote the induced matching number by $\nu(G)$ and by $|G|$, we mean the number of vertices in $G$. Further, $GW(\lambda)$ denotes the Galton-Watson tree with Poisson($\lambda$) offspring distribution (see Section \ref{s:probprelims} for definition).  
\begin{theorem}
\label{t:reginp}
Consider the \ER \, random graph $G(n,p)$ with $p = \lambda/n$ for $\lambda > 0$. Then we have the following :
\begin{enumerate}
\item It holds that
\begin{equation}
\label{e:reglimit}
\lim_{n \to \infty} \frac{\reg(I(n,p)) - \EXP{reg(I(n,p))}}{n} = 0, \, \, \mbox{a.s.}.
\end{equation}
The above statement also holds for projective dimension $\text{pd }I(n,p)$ and $\text{depth }I(n,p)$. 

\item For $\lambda \leq 1$, we have that as $n \to \infty$, 
$$ n^{-1}\reg(I(n,p)) \to  \EXP{\frac{\nu(GW(\lambda))}{|GW(\lambda)|}} \, \, \, \, \mbox{a.s.},$$
$$ n^{-1}\text{pd }I(n,p) \to  \EXP{\frac{\text{pd }GW(\lambda)}{|GW(\lambda)|}}\, \, \, \, \mbox{a.s.},$$
and
$$ n^{-1}\text{depth }I(n,p) \to  \EXP{\frac{\text{depth }GW(\lambda)}{|GW(\lambda)|}}\, \, \, \, \mbox{a.s.}.$$
\end{enumerate}
\end{theorem}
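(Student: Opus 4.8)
The plan is: Part (1) is a concentration estimate proved by a bounded--differences (Azuma--Hoeffding/McDiarmid) inequality for the vertex--exposure filtration of $G(n,p)$, followed by Borel--Cantelli; Part (2) is then reduced to computing $\lim_n n^{-1}\EXP{f(I(n,p))}$ for $f\in\{\reg,\mathrm{pd},\mathrm{depth}\}$, since a.s.\ convergence of $n^{-1}f(I(n,p))$ follows from convergence of the means together with \eqref{e:reglimit}. Concretely, for Part (1) I would expose the vertices $1,\dots,n$ in order, revealing at step $i$ all edges between $i$ and $\{1,\dots,i-1\}$; the key ingredient is a Lipschitz--type bound showing that modifying all edges incident to a single vertex changes $\reg(I(G))$, $\mathrm{pd}(I(G))$ and $\mathrm{depth}(I(G))$ by at most an absolute constant $c$. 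Granting this, the martingale $M_i=\EXP{f(I(n,p))\mid\text{edges among }[i]}$ has increments bounded by $c$, so $\pr{|f(I(n,p))-\EXP{f(I(n,p))}|>\varepsilon n}\le 2\exp(-\varepsilon^2 n/(2c^2))$, which is summable in $n$; Borel--Cantelli gives \eqref{e:reglimit} and its analogues, for every $p$ and in particular $p=\lambda/n$.

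\textbf{The Lipschitz--type bound.} For regularity I would use the short exact sequence $0\to (R/(I(G):x_v))(-1)\xrightarrow{\cdot x_v} R/I(G)\to R/(I(G)+x_v)\to 0$ together with the identities $I(G)+(x_v)=I(G\setminus v)+(x_v)$ and $I(G):x_v=I(G\setminus N_G[v])+(x_u:u\in N_G(v))$; killing the regular linear forms $x_v$, resp.\ $\{x_u\}_{u\in N_G(v)}$, gives $\reg(R/(I(G)+x_v))=\reg(R/I(G\setminus v))$ and $\reg(R/(I(G):x_v))=\reg(R/I(G\setminus N_G[v]))$, so $\reg(R/I(G))\le\max\{\reg(R/I(G\setminus N_G[v]))+1,\ \reg(R/I(G\setminus v))\}$. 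Combined with monotonicity of $\reg(R/I(\cdot))$ under induced subgraphs (so $\reg(R/I(G\setminus N_G[v]))\le\reg(R/I(G\setminus v))\le\reg(R/I(G))$) this yields $\reg(R/I(G\setminus v))\le\reg(R/I(G))\le\reg(R/I(G\setminus v))+1$, hence $\reg(R/I(\cdot))$, and thus $\reg(I(\cdot))=\reg(R/I(\cdot))+1$, changes by at most $1$ when the star at a vertex is modified. For $\mathrm{pd}$ and $\mathrm{depth}$ I would run the same sequence and use Auslander--Buchsbaum to transfer between them; this bounds the change under modification of the star at $v$ by something of order $\deg_G(v)$ (or $O(1)$), and since $G(n,\lambda/n)$ has maximum degree $o(\log n)$ w.h.p., the corresponding concentration inequality for near--Lipschitz functionals still makes the deviation at scale $\varepsilon n$ exponentially small in $n/\log^2 n$, which suffices for Borel--Cantelli.

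\textbf{The explicit limit for $\lambda\le 1$.} Since edge ideals of disjoint graphs live in disjoint variables, $R/I(G)\cong\bigotimes_C R_C/I(C)$ over $K$, so $\reg$, $\mathrm{pd}$, $\mathrm{depth}$ of $R/I(G)$ are sums over the components $C$ of $G$ (an isolated vertex contributing $0,0,1$ respectively). Using $\reg(I(G))=\reg(R/I(G))+1$ and its counterparts one gets, up to an $O(1/n)$ error,
$$
\frac{\reg(I(G(n,p)))}{n}=\frac1n\sum_{v\in[n]}g\big(C(v)\big),\qquad g(C):=\frac{\reg\big(R_C/I(C)\big)}{|C|}\in[0,1],
$$
where $C(v)$ is the component of $v$; taking expectations with $V_n$ uniform in $[n]$, it remains to show $\EXP{g(C(V_n))}\to\EXP{g(GW(\lambda))}$. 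Here I would invoke local weak (Benjamini--Schramm) convergence of $G(n,\lambda/n)$ rooted at $V_n$ to $GW(\lambda)$, which for $\lambda\le 1$ is a.s.\ finite. Since $g$ is not a local functional, I truncate: for $K\ge 1$, $g(C(V_n))\mathbf{1}\{|C(V_n)|\le K\}$ is a bounded functional of the radius--$K$ ball around $V_n$, hence its expectation converges to $\EXP{g(GW(\lambda))\mathbf{1}\{|GW(\lambda)|\le K\}}$, while $\EXP{g(C(V_n))\mathbf{1}\{|C(V_n)|>K\}}\le\pr{|C(V_n)|>K}$, and since $\{|C(V_n)|>K\}$ is again a radius--$K$--ball event, $\pr{|C(V_n)|>K}\to\pr{|GW(\lambda)|>K}\to 0$ as $K\to\infty$; for $\lambda\le 1$ the expected radius--$K$--ball size is at most $\sum_{j\le K}\lambda^j$, giving the tightness of $|C(V_n)|$ needed to make this uniform in $n$. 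An $\varepsilon/3$ argument then yields $\EXP{g(C(V_n))}\to\EXP{g(GW(\lambda))}$. Finally, $GW(\lambda)$ is a tree, so the identity $\reg(R_F/I(F))=\nu(F)$ for forests $F$ turns $\EXP{g(GW(\lambda))}$ into $\EXP{\nu(GW(\lambda))/|GW(\lambda)|}$, and the finitely many unicyclic components in $G(n,\lambda/n)$ span $o(n)$ vertices and so do not affect the limit; the same argument with $g(C)=\mathrm{pd}(R_C/I(C))/|C|$ and $g(C)=\mathrm{depth}(R_C/I(C))/|C|$ handles projective dimension and depth, no combinatorial reformulation being needed there.

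\textbf{Main obstacle.} The crux is the truncation step: $g$ genuinely depends on the whole component, so local weak convergence does not apply to it directly, and the contribution of atypically large components must be controlled \emph{uniformly in} $n$ — this is precisely where $\lambda\le 1$ enters (no giant component; $GW(\lambda)$ a.s.\ finite), and the critical case $\lambda=1$, where $\EXP{|GW(1)|}=\infty$, requires the ball--size tightness estimate rather than a naive first moment. A secondary technical point is securing the bounded--differences constant for $\mathrm{pd}$ and $\mathrm{depth}$ and, when only a near--Lipschitz bound is available, combining it correctly with the maximum--degree control in the concentration inequality.
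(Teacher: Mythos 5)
Your proposal is correct and follows essentially the same route as the paper: Part (1) via McDiarmid's bounded-difference inequality combined with the vertex-Lipschitz bound for regularity and a near-Lipschitz bound (of order $\Delta(G)$) for projective dimension handled by a degree-truncated concentration inequality, and Part (2) via additivity over components, rewriting $n^{-1}\EXP{\reg^*}$ as $\EXP{\reg^*(I_1)/|C_1|}$, and a truncation argument using local weak convergence to $GW(\lambda)$ together with its a.s.\ finiteness for $\lambda\le 1$ and the identity $\reg(R_F/I(F))=\nu(F)$ for forests. The only cosmetic difference is that you establish the Lipschitz bound for regularity through the colon-ideal short exact sequence rather than the paper's direct argument with $I(G)=J+xH$, which yields the same estimate.
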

\begin{remark}
\label{r:ermanyangcomp2}
\begin{enumerate}
\item Using \eqref{e:reglimit}, we can derive the following bounds for the growth rates of regularity :  There exists a positive constant $\beta_{\infty}(\lambda) \in (e^{-\lambda},\infty)$ (see \eqref{e:betainf} for an implicit definition) such that 
\begin{equation}
\label{e:regbds}
\beta_{\infty}(\lambda) - e^{-\lambda} \leq \liminf \frac{\reg(I(n,p))}{n} \leq \limsup \frac{\reg(I(n,p))}{n} \leq  1 - \frac{t_*  + e^{-\lambda t_*} + \lambda t_* e^{-\lambda t_*}}{2} \, \,  \mbox{a.s.,}
\end{equation}
where $t_*$ is the smallest root of $t = e^{-\lambda e^{-\lambda t}}$. Since $\nu(G(n,p)) \geq \hat{\beta}_0(G(n,p))$, with $\hat{\beta}_0(G(n,p))$ being the number of non-trivial components, the lower bound follows trivially from the strong law for $\hat{\beta}_0(G(n,p))$ (see \eqref{e:betahinf}). For the upper bound, we bound $\nu(G(n,p))$ by the matching number $M(G(n,p))$, the size of the maximum matching in $G(n,p)$. The exact asymptotics for $M(G(n,p))$ was derived in \cite{Karp1981} and plugging the same, we obtain the upper bound in \eqref{e:regbds}. 

\item Apriori from Theorem \ref{t:lrlp} or \cite[Theorem 1.6]{Erman17}, we obtain that $\reg(I(n,p)) \geq 3$ w.h.p. for $p = \lambda/n$ for $\lambda \in (0,\infty)$ and further w.h.p. $\beta_{i,i+s}(I(n,p)) = 0$ for all $i \in \N$ and $s \geq 4$. However, what we shown above is that $\liminf \frac{\reg(I(n,p))}{n} > 0$ a.s.. By definition, this implies that there exist random sequences $M_n,N_n$\footnote{Here we have used the standard Bachmann-Landau big  O notation.} such that $\beta_{M_n,M_n+N_n}(I(n,p)) \neq 0$ a.s. and $N_n = \Theta(n)$ a.s.. Since $\beta_{i,j}(I(n,p)) \neq 0$ only for $i \leq j \leq 2i$, we have that $M_n \leq M_n + N_n \leq 2M_n$ a.s. and so $M_n = \Omega(n)$ a.s.. Since $E(G(n,p)) = \Theta(n)$ a.s., we also have that $M_n = O(n)$ a.s.. Thus, $M_n = \Theta(n)$ a.s..

\item Our proof technique yields sub-Gaussian concentration and variance bounds as well for regularity, projective dimension and depth.  See \eqref{e:mcdiarmidconc}, Lemma \ref{l:conc_nearlip} and Section \ref{sec:proofreg} for details. 

\item For Krull dimension,  one can obtain the full strong law (i.e.,  Part (2) in the above theorem) for all $\lambda \in (0,\infty)$.  This is because the Krull dimension is same as the independence number of the underlying graph \cite[Theorem 1.33]{van2013} and using  \cite[Theorem 1]{Salez2016} (see also examples and remarks below the theorem therein),  one has strong law for independence number for \ER \, random graphs as well as many other sparse random graphs.  We remark at the end of Section \ref{s:intro} on possibilities for extending to other invariants and other random graph models.
\end{enumerate}
\end{remark} 
We now remark on proof techniques.  By Mcdiarmid's bounded difference inequality and an extension of the same,  we show that it suffices to prove convergence in expectation of $\reg(I(n,p))$ to deduce a.s.  convergence.  To use the bounded difference inequality,  we appeal to the Lipschitz property of regularity and near Lipschitz properties of projective dimension and depth.   For convergence in expectation,  we use the theory of local weak convergence (or Benjamini-Schramm convergence) in the sub-critical regime to relate regularity of \ER \, random graphs to that of its local weak limit,  the Galton-Watson tree with Poisson ($\lambda$) offspring distribution.  The other crucial facts of use are finiteness of the Galton-Watson tree in the sub-critical regime and that the regularity on a tree is equal to that of induced matching number plus one.     

\subsubsection{Unmixedness of random edge ideals}
\label{s:unmix}

We finally investigate unmixedness of edge ideals. We shall show that barring the very extreme values of $p$ and a certain intermediate regime, $I(n,p)$ is not unmixed w.h.p.. We would like to mention that De Loera et al. conjectured that if one generates monomial ideals in the \ER \, way, then with probability one it is not Cohen-Macaulay (  \cite[Conjecure 1]{PetHa2017}) when both the number of variables as well as the degree of the generators go to infinity. This conjecture was shown in \cite[Corollary 1.2]{deLoera2019average} for the case of random monomial ideals when the degree of the generators goes to infinity. Our model is different from theirs in the sense that we take the number of variables going to the infinity.  But motivated by their work it looks pertinent to ask whether similar conclusion is likely in our set up too or not.  We investigate that and get the following result strongly indicating that similar phenomena holds in our set-up as well.
\begin{theorem}
\label{t:unmixed_Inp}
\begin{enumerate}
\item If $p = n^{-\alpha}$ for $\alpha \in (\frac{3}{2},2)$ then $I(n,p)$ is unmixed w.h.p.. 
\item If $p = n^{-\alpha}$ for $\alpha \in [1,\frac{3}{2})$ then $I(n,p)$ is not unmixed w.h.p.. 
\item If $p \in (0,1)$, then $I(n,p)$ is not unmixed w.h.p.. 
\item If $p = 1 - n^{-\alpha}$ for $\alpha \in (0,2)$ and $\alpha^{-1} \notin \{2,3,\ldots\}$, then $I(n,p)$ is not unmixed w.h.p.. 
\item If $p = 1 - n^{-\alpha}$ for $\alpha > 2$ then $I(n,p)$ is unmixed w.h.p..
\end{enumerate}
\end{theorem}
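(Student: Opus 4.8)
I will use throughout the standard translation: $I(n,p)$ is unmixed if and only if $G(n,p)$ is \emph{well-covered}, i.e.\ all of its maximal independent sets have the same cardinality (equivalently, all of its minimal vertex covers do); also that isolated vertices never affect well-coveredness, and that the independent sets of $G(n,p)$ are exactly the cliques of the complement $G(n,1-p)$, so that $G(n,p)$ is well-covered if and only if all maximal cliques of $G(n,1-p)$ have the same size. The five claims fall into two patterns. In (1) and (5) the graph, or its complement, degenerates. For (1), with $p=n^{-\alpha}$, $\alpha\in(3/2,2)$, a first-moment bound gives $\EXP{\#\{\text{paths on three vertices in }G(n,p)\}}\asymp n^{3}p^{2}=n^{3-2\alpha}\to 0$, so w.h.p.\ $G(n,p)$ has maximum degree $\le 1$ and is thus a disjoint union of edges and isolated vertices; every maximal independent set of such a graph consists of one endpoint of each edge together with all the isolated vertices, so it is well-covered. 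For (5), with $p=1-n^{-\alpha}$, $\alpha>2$, the complement $G(n,n^{-\alpha})$ has $\EXP{\#\text{edges}}\asymp n^{2-\alpha}\to 0$, so w.h.p.\ it is edgeless and $G(n,p)=K_n$, all of whose maximal independent sets are singletons.

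In (2), (3) and (4) one must instead produce, w.h.p., two maximal independent sets of different cardinalities. For (2), with $p=n^{-\alpha}$, $\alpha\in[1,3/2)$, I will show that w.h.p.\ $G(n,p)$ has a connected component that is an induced path on three vertices: its expected number is $\asymp n^{3}p^{2}(1-p)^{3n}\asymp n^{3-2\alpha}e^{-3n^{1-\alpha}}$, which diverges on this range because $n^{1-\alpha}$ stays bounded, and, since distinct such components occupy disjoint vertex sets, a second-moment (or Poisson-approximation) argument gives existence; such a component, via its middle vertex versus its two endpoints, yields maximal independent sets of $G(n,p)$ of two sizes. For (4), with $p=1-n^{-\alpha}$, $\alpha\in(0,2)$, $\alpha^{-1}\notin\{2,3,\dots\}$, I will work in $\overline G:=G(n,q)$, $q=n^{-\alpha}$, and produce maximal \emph{cliques} of two sizes. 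If $\alpha\in[1,2)$, then w.h.p.\ $\overline G$ has an isolated vertex (the number of non-isolated vertices is at most twice the number of edges, hence $O(n^{2-\alpha})=o(n)$) and also an edge lying in no triangle (the edge count is $\asymp n^{2-\alpha}\to\infty$, while the expected number of triangles is $\asymp n^{3-3\alpha}=O(1)$), giving maximal cliques of sizes $1$ and $2$. If $\alpha\in(0,1)$ and $\alpha^{-1}\notin\mathbb Z$, the open interval $(\alpha^{-1},\,1+2\alpha^{-1})$ has length $1+\alpha^{-1}>2$ and so contains two integers $j_1<j_2$; for each such $j$, $\EXP{\#\{j\text{-cliques of }\overline G\}}\asymp n^{\,j(1-\alpha(j-1)/2)}\to\infty$ with a polynomial exponent (as $j<1+2\alpha^{-1}$), a standard second-moment estimate gives a $j$-clique w.h.p., and $j>\alpha^{-1}$ forces $\EXP{\#\{\text{non-maximal }j\text{-cliques}\}}=o(\EXP{\#\{j\text{-cliques}\}})$, so a maximal $j$-clique exists w.h.p.; taking $j\in\{j_1,j_2\}$ gives the two sizes.

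Part (3), with $p\in(0,1)$ constant, is the only one with no bounded-size obstruction, since $G(n,p)$ is connected w.h.p., so the argument is global. Writing $b:=(1-p)^{-1}$, I will combine the classical second-moment bound $\alpha(G(n,p))\ge(2-\varepsilon)\log_b n$ w.h.p.\ with an upper bound of $(1+\varepsilon)\log_b n$ w.h.p.\ on the size of the greedy maximal independent set obtained by scanning the vertices $1,2,\dots,n$ and adding each vertex having no already-chosen neighbour; the number of still-eligible vertices contracts by a factor $\approx 1-p$ at each of the $O(\log n)$ steps, so a Chernoff bound at each step controls the whole process. For large $n$ these two sizes are distinct, so $G(n,p)$ has maximal independent sets of two cardinalities and is not well-covered.

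\textbf{Main obstacle.} Parts (1), (2), (4) and (5) reduce, once the right configuration is spotted, to routine first- and second-moment estimates. The genuinely delicate part will be (3): it requires a real concentration statement for the size of a greedily built maximal independent set in the dense regime — equivalently, an upper bound on the independent domination number of $G(n,p)$ that is asymptotically strictly below the independence number — and McDiarmid's/Azuma's inequalities do not apply off the shelf, so the greedy process has to be followed step by step. A secondary point of care is the bookkeeping in (4): both clique sizes must lie strictly inside the window $\alpha^{-1}<j<1+2\alpha^{-1}$ so that the relevant clique counts stay polynomially large and the second-moment method applies; it is precisely at the excluded values $\alpha^{-1}\in\{2,3,\dots\}$ that the endpoints of this window become integers and this room disappears.
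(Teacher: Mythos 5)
Your overall strategy --- unmixedness of $I(n,p)$ is well-coveredness of $G(n,p)$, equivalently the statement that all maximal cliques of $G(n,1-p)$ have the same size --- is exactly the paper's, and parts (1), (2) and (5) follow the paper's route (where the paper invokes Theorem \ref{t:denrg} on strictly balanced subgraphs, you compute the first and second moments directly; your unified treatment of $\alpha=1$ in (2), via $n^{3-2\alpha}e^{-3n^{1-\alpha}}\to\infty$, absorbs the boundary case for which the paper separately cites a tree-component result). You genuinely diverge in (3) and in the $\alpha\in(0,1)$ case of (4). For (3) the paper simply cites Bollob\'as's classical result that a dense random graph has maximal cliques of different orders; you rederive this from the gap between the greedy maximal independent set, of size $(1+o(1))\log_b n$ with $b=(1-p)^{-1}$ (the Grimmett--McDiarmid analysis you sketch, tracking the geometric decay of the eligible pool, is standard and correct), and the independence number $(2-o(1))\log_b n$. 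This costs more work but is self-contained. For (4) with $\alpha<1$ the paper fixes the $k$ with $1/k<\alpha<1/(k-1)$, cites Kahle's lemma for maximal $k$-cliques and Theorem \ref{t:denrg} for $(k+1)$-cliques; your version --- take any two integers in the open window $\left(\alpha^{-1},\,1+2\alpha^{-1}\right)$ and show maximal cliques of each size exist by a second moment on clique counts plus a first moment on extendable cliques --- is a clean, equally valid alternative (and in fact it never uses $\alpha^{-1}\notin\mathbb Z$, since that window always contains two integers when $\alpha<1$; the hypothesis is what makes the paper's choice of a single $k$ with maximal $k$-cliques possible).

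One small repair is needed. In (4) with $\alpha\in[1,2)$ your justification for an isolated vertex of $G(n,q)$, namely that the number of non-isolated vertices is at most twice the number of edges and hence $O(n^{2-\alpha})=o(n)$, fails at the endpoint $\alpha=1$, where $n^{2-\alpha}=n$. Replace it by the standard computation $\EXP{N_0(G(n,q))}=n(1-q)^{n-1}\sim n\,e^{-n^{1-\alpha}}\to\infty$ together with a second-moment (Chebyshev) bound, which yields an isolated vertex w.h.p.\ for all $\alpha\geq 1$. With that fix the argument is complete.
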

Since unmixedness is equivalent to all minimal vertex covers being of the same size, we shall use structural results about random graphs to show non-unique minimal vertex covers. The main tools in our proof are thresholds for containement of subgraphs in random graphs (see Theorem \ref{t:denrg}), thresholds for maximal cliques and asymptotics of clique numbers. The reason for using cliques is the duality between vertex covers of a graph and cliques in the complement graph. Apart from some boundary cases, we omit $p = n^{-\alpha}, \alpha < 1$ and $p = n^{-\alpha}$ for $\alpha > 2$. The former is omitted because our techniques fail in this regime. In the latter regime, the graph consists only of isolated vertices w.h.p. and hence we have avoided this pathalogical case in our theorem. We have broken down the theorem into multiple regimes as the proof in each regime uses slightly different arguments and also we omit some boundary cases. \\

\textbf{Questions for further research}: The proof of Theorem \ref{t:reginp} via near-Lipschitz property,  local weak convergence and additivity of regularity or projective dimension can also be extended to other random graphs (for example configuration models) with local weak limits and we would expect this approach can be extended further for other invariants as well as in the supercritical regime.  For example,  our proof of law of large numbers applies to any invariant $h(G)$ of the edge ideal that satisfies $|h(G)-h(G\setminus\{v\})| \leq p(\Delta(G))$ where $p$ is any polynomial and $\Delta(G)$ is the maximum degree.  This automatically leads us to a series of questions regarding polynomial Lipschitz bounds for regularity,  projective dimension,  depth and Betti numbers for higher powers of edge ideals as well as similar questions for other graph-related ideals like path ideals,  binomial edge ideals etc..  Determining the limit of the regularity or other algebraic invariants for edge ideals of \ER \, random graphs in the super-critical regime $\lambda > 1$ or even showing its existence remains a challenging problem by itself.  

 \section{Preliminaries}
 \label{sec:prelims}
 
 In this section, we put down the notations and the terminologies that will be used throughout the article. Though most of this is standard, we still define them for a self-contained exposition.  For more details on the algebraic notions, we refer the reader to \cite{BanBayHa2019}, \cite{Beyarslan2015} and \cite{Miller2004}. 
 
\subsection{Combinatorial Preliminaries}
 
 For any $A \subseteq V (G)$, the induced subgraph on $A$ is the maximal subgraph of $G$ whose vertex set is $A$. For any graph $G$, the complement graph, denoted by $G^c$, is the graph whose vertex set is $V(G)$ and the edge set consists of non-edges of $G$, i.e., for $a, b \in V (G), ab \in E(G^c)$ if and only if $ab \notin E(G)$.\\

A cycle of length $n$ in a graph $G$ is a closed walk along its edges, $x_1x_2, x_2x_3, \ldots, x_{n-1}x_n, x_nx_1$, such that $x_i \neq x_j$ for $ i\neq j$. We denote the cycle on $n$ vertices by $C_n$. A chord in the cycle $C_n$ is an edge $x_ix_j$ where $x_j \neq x_{i-1},x_{i+1}$. A graph is said to be {\em chordal} if for any cycle of length greater than or equal to $4$ there is a chord. We say a graph is {\em 4-chordal} if any cycle of length $4$ has a chord. A graph is said to be co-chordal (resp. $4$-cochordal) if the complement of $G$ is chordal ($4$-chordal). The $4$-cochordal graphs are also called {\em gap free} \cite{BanBayHa2019} which we define now. In a graph $G$, we say two disjoint edges $uv$ and $xy$ form an induced gap if $G$ does not have an edge with one endpoint in $\{u,v\}$ and the other in $\{x,y\}$. A graph without an induced gap is called {\em gap-free}. Equivalently, $G$ is gap-free if and only if $G^c$ contains no induced $C_4$.\\

A graph $G$ is said to be {\em locally cochordal} if for every vertex of $G$ the graph obtained by deleting the vertex and all its neighbors is cochordal. A graph $G$ is said to be {\em locally 4 cochordal} if for every vertex of $G$ the graph obtained by deleting the vertex and all its neighbors is 4 cochordal. As one would expect locally cochordal and locally 4 cochordal are implied by cochordal and $4$-cochordal respectively.\\

A {\emph {matching}} in a graph is a collection if edges such that no two of them share any vertex. A matching is called {\emph{induced matching}} if the induced subgraph of G on the vertices belonging to those edges has no other edge. Maximum size of an induced matching is called the induced matching number $\nu(G)$ of the graph. For example, induced matching number of a $5$-cycle is $1$ and that of a $6$-cycle is $2$.\\
 
An {\emph {independent set}} in a graph is a set of vertices such that there is no edge among them. Complement of an independent set is called a {\emph {vertex cover}}. A vertex cover whose no subset is a vertex cover is called a {\emph {minimum vertex cover}}. \\

A forest is a graph without any cycles. A tree is a connected forest. A complete graph (or clique) on $n$ vertices is a graph where for any two vertices there is an edge joining them. It is denoted by $K_n$. A bipartite graph is a graph whose vertices can be split into two groups such that there is no edge between vertices of the same group; only edges are between vertices coming from different groups. It is easy to see that a graph is bipartite if and only if it is without any cycle of odd length.
  
\subsection{Algebraic preliminaries} Let $S = K[x_1, . . . , x_n]$ be the polynomial ring over a field $K$. Let $M$ be a finitely generated $\mathbb{Z}^n$-graded $S$-module. It is known that $M$ can be successively approximated by free modules. Formally speaking, there exists an exact sequence of minimal possible length, called a minimal free resolution of $M$: 
$$\mathbb{F}: \text{ }0  \longrightarrow F_p \overset{d_p} \longrightarrow F_{p-1} \cdots \overset{d_2} \longrightarrow F_1 \overset{d_1} \longrightarrow F_0 \overset{d_0} \longrightarrow M \longrightarrow 0 $$
Here, $F_i = \bigoplus _{\sigma \in \mathbb{Z}^n} S(-\sigma)^{\beta_{i,\sigma}}$ , where $S(-\sigma)$ denotes the free module obtained by shifting the degrees in $S$ by $\sigma$. The numbers $\beta_{i,\sigma}$'s are positive integers and are called the multigraded Betti numbers of $M$. We often identify $\sigma$ with the monomial whose exponent vector is $\sigma$. For example, over $K[x,y]$, we may write $\beta_{i,x^2y}(M)$ instead of $\beta_{i,(2,1)}(M)$.\\

 For every $j \in \mathbb{Z}$, we have  $\beta_{i,j} = \Sigma_ {\sigma : |\sigma|=j} \beta_{i,\sigma}$  is called the $(i,j)$-th standard graded Betti number of $M$. Three very important homological invariants that are related to these numbers are the Castelnuovo-Mumford regularity, or simply regularity, the depth and the projective dimension, denoted by $\reg(M), \text{depth }(M)$ and $\text{pd }(M)$ respectively:
\begin{align*}
\reg(M) & := \max \{|\sigma|-i : \beta_{i,\sigma} \neq 0\}, \\
\text{ depth }M & := \inf \{i : \text{Ext}^i(K,M) \neq 0\}, \\
\text{pd }M & := \max \{i : \mbox{there is a $\sigma$ such that $\beta_{i,\sigma} \neq 0$} \}.
\end{align*}
For an ideal $I$ in $S$ one defines Krull Dimension (denoted by $\dim(S/I)$:

$$\dim(S/I):=\max\{t | \text{There exists prime ideals in }S/I, \text{ } P_0 \subsetneq P_1 \ldots \subsetneq P_t \}$$

If $S$ is viewed as a standard graded $K$-algebra and $M$ is a graded $S$-module, then the
graded Betti numbers of M are also given by $\beta_{i,j}(M) = \text{dim}_K \text{Tor}_i(M,K)_j$, and so we have $\reg(M)=\text{max}\{j-i| \text{Tor}_i(M,K)_j \neq 0\} $ and $\text{pd }M=\text{max}\{i| \text{Tor}_i(M,K) \neq 0 \}$ and $ \text{depth }M = n - \text{pd }M$. Here $\text{Tor}_i(M,K)$ denotes the $i$th homology module of the complex $\mathbb{F} \otimes_S K$, and $\text{Tor}_i(M,K)_j$ denotes its $j$th graded component \cite{Miller2004} \\

 Note that from definition for any graded ideal $I$ in $S$, we have $\beta_{i,j}(I)=\beta_{i+1,j}(\frac{S}{I})$ for all $i\geq 0$.

 We explain this with the following example:\\
 
 Let $M = \frac{\mathbb{Q}[x_1, . . . , x_5]}{(x_1x_2, x_2x_3, x_3x_4, x_4x_5, x_5x_1)}$. Then the minimal free resolution of $M$ is:
 
 $$0  \longrightarrow F_3 \overset{d_3} \longrightarrow F_{2}  \overset{d_2} \longrightarrow F_1 \overset{d_1} \longrightarrow F_0 \overset{d_0} \longrightarrow M \longrightarrow 0 $$

Here: 
$ \beta_{0,1} =1, \beta_{0,\sigma} =0$ otherwise.\\
$\beta_{1, 2}= 5$, and $\beta_{1,j} = 0$ otherwise.\\
$\beta_{2,3}= 5$ and $\beta_{2,j} = 0$ otherwise.\\
$ \beta_{3,  5} = 1$, and $\beta_{3,j} = 0$ otherwise.\\

Note that if we take $M=(x_1x_2, x_2x_3, x_3x_4, x_4x_5, x_5x_1)$ instead then the resolution becomes 

$$0  \longrightarrow F_2 \overset{d_2} \longrightarrow F_{1}  \overset{d_1} \longrightarrow F_0 \overset{d_0} \longrightarrow M \longrightarrow 0 $$

Here: 
$ \beta_{0,2} =5, \beta_{0,j} =0$ otherwise.\\
$\beta_{1, 3}= 5$, and $\beta_{1,j} = 0$ otherwise.\\
$\beta_{2,5}= 1$ and $\beta_{2,j} = 0$ otherwise.\\

An module $M$ is said to have linear presentation if the matrix of $d_0$ have linear or zero entries. A module $M$ is said to have linear resolution if for all $i \geq 1$ the matrices of $d_i$s have linear entries. When $M$ is a homogeneous ideal $I$ generated in degree $d$, it follows from definition that $I$ has linear resolution if and only if $\reg(I)=d$. An ideal $I$ in $S$ is said to be unmixed if all its associated primes are minimal of the same height. We say $I$ (or equivalently $\frac{S}{I}$) is Cohen-Macaulay if the Krull dimension and depth are equal. Cohen-Macaulay ideals are always unmixed and known to have many nice geometric properties. It can be checked that the module $M$ in the example is Cohen-Macaulay.\\

\subsection{Edge Ideals}
\label{s:EI}

 In this section, we identify the combinatorial structures that are related to regularity of edge ideals. Recall the definition of edge ideals from \eqref{d:ei}. Note that regularity of an edge ideal is bounded below by 2, which is the generating degree of an edge ideal. Thus, characterizing combinatorial structures of a graph with regularity two can be considered as a basic question in the subject.The following combinatorial characterization of such graphs is nowadays often referred to as Froberg's characterization. We state this and a characterization of linear presentation now. \\
\begin{theorem}(\cite[Theorem 1.1]{Fr1990})
\label{thm:Froberg}
The edge ideal of a finite simple graph has linear resolution if and only if the graph is co-chordal.
\end{theorem}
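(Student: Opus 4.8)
The plan is to pass to the Stanley--Reisner picture and reduce the statement to a purely topological fact about clique complexes of chordal graphs. First I would observe that $I(G)$ is exactly the Stanley--Reisner ideal $I_\Delta$ of the independence complex $\Delta=\mathrm{Ind}(G)$, whose faces are the independent sets of $G$: its minimal non-faces are precisely the edges of $G$. Moreover $\mathrm{Ind}(G)$ is the flag (clique) complex of the complement graph $G^c$, since a vertex set is independent in $G$ iff it is a clique in $G^c$. When $G$ has at least one edge, $I(G)$ is generated in degree $2$, so ``$I(G)$ has a linear resolution'' means $\beta_{i,j}(S/I(G))=0$ for all $i\geq 1$ with $j\neq i+1$ (the edgeless case is trivial on both sides, with $G^c=K_n$ chordal). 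By Hochster's formula, $\beta_{i,W}(S/I_\Delta)=\dim_K\widetilde H_{|W|-i-1}(\Delta|_W;K)$, where $\Delta|_W$ is the induced subcomplex on $W$; reindexing, the linear-resolution condition becomes: for every $W\subseteq V(G)$ the complex $\Delta|_W=\mathrm{Ind}(G[W])$ satisfies $\widetilde H_k(\,\cdot\,;K)=0$ for all $k\geq 1$. Since $\Delta|_W$ is the clique complex of the induced subgraph $G^c[W]$ and chordality is hereditary, the theorem reduces to: $G^c$ is chordal $\iff$ the clique complex of every induced subgraph of $G^c$ has vanishing reduced homology in positive degrees.

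For the ``only if'' part of this reduced statement (the easy direction of Fr\"oberg's theorem) I would argue the contrapositive. If $G^c$ is not chordal it contains an induced cycle $C_m$ with $m\geq 4$; let $W$ be its vertex set. Then $G^c[W]=C_m$ is triangle-free, so its clique complex equals $C_m$ viewed as a $1$-dimensional complex, and $\widetilde H_1(C_m;K)\cong K\neq 0$. This violates the homological criterion, so $I(G)$ cannot have a linear resolution.

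The ``if'' direction is the substantive one. I would prove, by induction on the number of vertices, that the clique complex of any chordal graph $H$ has $\widetilde H_k=0$ for $k\geq 1$ (then apply it to each induced subgraph of $G^c$, which is again chordal). If $H$ is complete or edgeless, its clique complex is a simplex or a set of points and there is nothing to prove. Otherwise, by Dirac's theorem $H$ has a simplicial vertex $v$, i.e.\ $N_H(v)$ is a clique. Write $\mathrm{Cliq}(H)=A\cup B$ with $A=\mathrm{Cliq}(H-v)$ and $B$ the full simplex on $\{v\}\cup N_H(v)$; then $A\cap B$ is the full simplex on $N_H(v)$. Both $B$ and $A\cap B$ are contractible, so the Mayer--Vietoris sequence gives $\widetilde H_k(\mathrm{Cliq}(H))\cong\widetilde H_k(\mathrm{Cliq}(H-v))$ for every $k\geq 1$, and the inductive hypothesis applied to the chordal graph $H-v$ closes the argument. (Equivalently, one phrases this as the statement that the clique complex of a chordal graph collapses onto a subcomplex on fewer vertices.)

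The main obstacle is this last direction: locating the simplicial vertex and packaging the Mayer--Vietoris/collapsibility induction so that it applies uniformly to all induced subgraphs. A secondary point needing care is the precise translation between $\beta_{i,j}(I)$ and $\beta_{i,j}(S/I)$ and the meaning of ``linear resolution'' in the degenerate case $I(G)=0$; these are routine but must be pinned down for the Hochster reduction to be exact. As an alternative to the Hochster step one may invoke Eagon--Reiner together with Terai's converse --- $I_\Delta$ has a linear resolution iff the Alexander dual $\Delta^\vee$ is Cohen--Macaulay over $K$ --- and then check via Reisner's criterion that $\mathrm{Ind}(G)^\vee$, whose minimal non-faces are the minimal vertex covers of $G$, is Cohen--Macaulay exactly when $G^c$ is chordal; this carries the same topological content in a different wrapper.
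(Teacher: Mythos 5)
The paper offers no proof of this statement at all --- it is quoted verbatim from Fr\"oberg \cite[Theorem 1.1]{Fr1990} and used as a black box --- so there is nothing internal to compare against; your argument stands or falls on its own. It is correct, and it is essentially the standard modern proof: the translation $I(G)=I_{\mathrm{Ind}(G)}$ with $\mathrm{Ind}(G)$ the clique complex of $G^c$, Hochster's formula reducing linearity to $\widetilde H_k(\mathrm{Ind}(G[W]);K)=0$ for $k\ge 1$ and all $W$, the induced $C_m$ ($m\ge 4$) giving a nonzero $\widetilde H_1$ for the easy direction, and Dirac's simplicial-vertex lemma plus Mayer--Vietoris for the hard one. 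The index bookkeeping checks out: with $\beta_{i,W}(S/I_\Delta)=\dim_K\widetilde H_{|W|-i-1}(\Delta|_W;K)$, the excluded case $k=0$ corresponds exactly to the linear strand $j=i+1$, and $\widetilde H_{-1}$ never intervenes since every singleton is independent. Two small points you should pin down in a written version: (i) when the simplicial vertex $v$ is isolated, $A\cap B$ is the void complex $\{\emptyset\}$ rather than a contractible one, but the Mayer--Vietoris segment $\widetilde H_k(A)\oplus\widetilde H_k(B)\to\widetilde H_k(A\cup B)\to\widetilde H_{k-1}(A\cap B)$ still yields $\widetilde H_k(\mathrm{Cliq}(H))\cong\widetilde H_k(\mathrm{Cliq}(H-v))$ for all $k\ge 1$ because $\widetilde H_j(\{\emptyset\})=0$ for $j\ge 0$; (ii) the decomposition $\mathrm{Cliq}(H)=A\cup B$ needs the one-line verification that every clique containing $v$ lies inside the simplex on $\{v\}\cup N_H(v)$, which is exactly where simpliciality of $v$ enters. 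One could reasonably ask whether the homology should be computed over the fixed field $K$ of the paper; your argument does this correctly throughout, which matters since the ``if'' direction as you prove it gives vanishing over every field, consistent with the fact that Fr\"oberg's criterion is field-independent.
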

\begin{theorem}(\cite[Proposition 1.3]{NePe2013})
\label{thm:Froberg1}
The edge ideal of a finite simple graph has linear presentation if and only if the graph is $4$-co-chordal.
\end{theorem}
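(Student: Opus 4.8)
The idea is to convert ``linear presentation'' into a homology vanishing via Hochster's formula and then reduce to a direct analysis of independence complexes. Write $S=K[x_1,\dots,x_n]$ and $I=I(G)$; since $I$ is generated in degree $2$ and $\beta_{1,j}(I)=\beta_{2,j}(S/I)$, the ideal $I$ has linear presentation precisely when $\beta_{2,j}(S/I)=0$ for all $j\ge 4$. Now $I=I_\Delta$ is the Stanley--Reisner ideal of the independence complex $\Delta=\mathrm{Ind}(G)$ (faces $=$ independent sets of $G$), and the restriction $\Delta|_W$ equals $\mathrm{Ind}(G[W])$ for $W\subseteq[n]$. By Hochster's formula,
\[
\beta_{2,j}(S/I)=\sum_{W\subseteq[n],\,|W|=j}\dim_K\widetilde H_{j-3}\bigl(\mathrm{Ind}(G[W]);K\bigr),
\]
so $I$ has linear presentation iff $\widetilde H_{|W|-3}(\mathrm{Ind}(G[W]);K)=0$ for every $W$ with $|W|\ge4$. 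Finally recall that $G$ is $4$-co-chordal iff it is gap-free iff it has no induced $2K_2$, i.e.\ no $4$-vertex induced subgraph that is two disjoint edges.

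For the ``only if'' direction I argue contrapositively: an induced $2K_2$ on a $4$-set $W$ gives $\mathrm{Ind}(G[W])=\mathrm{Ind}(2K_2)$, whose maximal faces are the four pairs meeting both edges; this complex is the boundary of a quadrilateral, so $\widetilde H_1\ne0$, whence $\widetilde H_{|W|-3}(\mathrm{Ind}(G[W]))\ne0$ and $I$ has no linear presentation. (Equivalently: $\beta_{i,\sigma}(S/I)$ in a squarefree multidegree $\sigma$ sees only the generators supported on $\sigma$, and on the two disjoint edges the Koszul syzygy of the corresponding coprime quadrics is a minimal first syzygy of degree $4$.)

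For the ``if'' direction, assume $G$ is gap-free, hence so is every induced subgraph. Fix $W$ with $|W|=j\ge4$, put $H=G[W]$, and let $\tau=\tau(H)$ be its minimum vertex-cover number, so $\dim\mathrm{Ind}(H)=\alpha(H)-1=j-\tau-1$. If $\tau\ge3$ the chain group in degree $j-3$ vanishes; if $\tau=0$ then $\mathrm{Ind}(H)$ is a simplex; if $\tau=1$ then $\mathrm{Ind}(H)$ is the union of a simplex with the closed star of a covering vertex, glued along an empty or contractible simplex, so $\widetilde H_k=0$ for all $k\ge1$; and if $H$ has an isolated vertex then $\mathrm{Ind}(H)$ is a cone. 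In each case $\widetilde H_{j-3}(\mathrm{Ind}(H))=0$ because $j-3\ge1$. The crucial remaining case is $\tau=2$ with no isolated vertex. Choose a minimum cover $\{a,b\}$; then $X=W\setminus\{a,b\}$ is independent and each of its vertices is adjacent to $a$ or to $b$, so $H$ is a ``generalised double star'': with $A,B,C$ the vertices of $X$ adjacent to $a$ only, to $b$ only, or to both, the edges of $H$ are $\{\,av:v\in A\cup C\,\}$, $\{\,bv:v\in B\cup C\,\}$, and possibly $ab$. Minimality forbids $A\cup C$ or $B\cup C$ from being empty, and gap-freeness forbids exactly the configuration $ab\notin E(H)$ with $A,B$ both nonempty (else $\{a,a',b,b'\}$ induces $2K_2$ for any $a'\in A$, $b'\in B$). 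One checks the maximal faces of $\mathrm{Ind}(H)$ lie among $X$, $\{a\}\cup B$, $\{b\}\cup A$, and (when $ab\notin E(H)$) $\{a,b\}$; of these only $X$ has dimension $j-3$ except possibly when $j=4$, and having all four of these faces of dimension $j-3$ and maximal forces $H=2K_2$, which is excluded. Hence $C_{j-3}(\mathrm{Ind}(H))$ is spanned by at most two simplices meeting along at most a common ridge (or, for $j=4$, by at most three edges on four vertices forming a path or a pair of disjoint edges, never a $4$-cycle), so $\partial_{j-3}$ is injective and $\widetilde H_{j-3}(\mathrm{Ind}(H))=0$. This completes the proof.

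\textbf{Main obstacle.} Everything outside the $\tau=2$ case is bookkeeping. The heart of the matter is the assertion that a gap-free generalised double star has an independence complex with no homology in the top degree: one must enumerate its maximal independent sets under the constraints coming from minimality of the cover and from gap-freeness, and verify they cannot assemble into a nontrivial top cycle --- the sole obstruction being the square $\mathrm{Ind}(2K_2)$, which the hypothesis kills. A cleaner packaging of this step is a collapsing argument: exhibit a free ridge $X\setminus\{v\}$ of the top facet $X$ (one contained in no other maximal face), collapse it, and iterate until no $(j-3)$-face survives; establishing the existence of such a free ridge is once more exactly where gap-freeness enters.
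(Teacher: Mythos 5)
Your proof is correct, but note that the paper does not prove this statement at all --- it simply imports it from Nevo--Peeva \cite[Proposition 1.3]{NePe2013}. What you have written is a self-contained argument along the standard lines: translate linear presentation into the vanishing $\beta_{2,j}(S/I)=0$ for $j\ge 4$, apply Hochster's formula to reduce to $\widetilde{H}_{|W|-3}(\mathrm{Ind}(G[W]);K)=0$ for all $W$ with $|W|\ge 4$, observe that $\mathrm{Ind}(2K_2)$ is a hollow square (giving the ``only if'' direction), and for the converse classify induced subgraphs by their vertex-cover number $\tau$, the only nontrivial case being $\tau=2$. That case analysis is sound: since $\dim\mathrm{Ind}(H)=j-\tau-1$, the top chain group is spanned by the facets of cardinality $j-2$, and your enumeration of the facets of the ``generalised double star'' together with the count $|A|+|B|+2|C|=2$ against $|A|+|B|+|C|=j-2$ correctly yields at most two top facets sharing a ridge when $j\ge 5$ (so the top boundary map is injective), while for $j=4$ the one-skeleton $H^c$ is a forest unless $H=2K_2$, which gap-freeness excludes. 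One phrasing slip: the parenthetical claim that ``only $X$ has dimension $j-3$ except possibly when $j=4$'' is not literally true --- $\{a\}\cup B$ has dimension $j-3$ whenever $|A|+|C|=1$, for any $j$ --- but your subsequent ``at most two simplices meeting along at most a common ridge'' is the correct statement and is what the argument actually uses, so nothing breaks. The trade-off between the two routes is clear: the citation keeps the paper short, whereas your argument makes the combinatorial content of the equivalence explicit --- gap-freeness is exactly the obstruction $\mathrm{Ind}(2K_2)\simeq S^1$ propagated through Hochster's formula --- at the cost of a page of bookkeeping.
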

For any ideal $I$ and any element $a$ in $S$ the colon ideal, $I:a$ is the ideal $(b \in S| ab \in I)$. We note that for any graph $G$ and any vertex $x$, the ideal $I(G):x$ is the edge ideal of the graph obtained by deleting $x$ and its neighbors along with any resultant isolated vertices. More specifically, $I(G):x = I(G \setminus Nb[x]) + (y | y \in Nb(x) )$,here $Nb(x)$ denotes the set of neighbors of $x$ and $Nb[x]= Nb(x) \cup \{x\}$. As a convention, we say that a graph $G$ or its edge ideal $I(G)$ has local linear resolution if for every vertex $x$ the ideal $(I(G):x)$ has linear resolution. Similarly we say that a graph $G$ or its edge ideal $I(G)$ has local linear presentation if for every vertex $x$ the ideal $(I(G):x)$ has linear presentation. \\

From the definitions of local linear resolution and presentation along with Theorems \ref{thm:Froberg} and \ref{thm:Froberg1}, we obtain the following obvious characterization.
\begin{corollary}
\label{c:loclinrescondn}
The edge ideal of a finite simple graph has local linear resolution if and only if the graph is locally co-chordal. Similarly, the edge ideal of a finite simple graph has local linear presentation if and only if the graph is locally $4$-co-chordal.
\end{corollary}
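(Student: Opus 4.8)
The plan is to unwind the definition of local linear resolution (resp.\ local linear presentation) one vertex at a time and reduce to an ordinary edge ideal, where Fr\"oberg's theorems (Theorems \ref{thm:Froberg} and \ref{thm:Froberg1}) apply verbatim. Fix a vertex $x$ and put $H := G \setminus Nb[x]$. As recorded just above the statement, $I(G):x = I(H) + (y \mid y \in Nb(x))$; the one substantive observation is that the variables $y \in Nb(x)$ divide no generator of $I(H)$, since $H$ is formed precisely by deleting the vertices $Nb[x]$. Hence these variables form a regular sequence of linear forms modulo $I(H)$, and $S/(I(G):x)$ agrees, up to a polynomial ring extension, with $K[V(H)]/I(H)$ — equivalently, up to adjoining free variables, $I(G):x$ is the Stanley--Reisner ideal of a cone with apex $x$ over the independence complex of $H$.

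Adjoining a regular sequence of fresh linear forms only tensors the minimal graded free resolution with a Koszul complex on those forms, which changes neither the regularity nor the linearity of the presentation. Therefore $I(G):x$ has a linear resolution (resp.\ a linear presentation) if and only if $I(H)$ does, the degenerate cases ($x$ isolated, or $H$ edgeless) being immediate. Now apply Fr\"oberg: by Theorem \ref{thm:Froberg}, $I(H)$ has a linear resolution iff $H = G \setminus Nb[x]$ is co-chordal, and by Theorem \ref{thm:Froberg1}, $I(H)$ has a linear presentation iff $H$ is $4$-co-chordal.

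To finish, quantify over all vertices: by definition $I(G)$ has local linear resolution iff $I(G):x$ has a linear resolution for every vertex $x$, which by the above holds iff $G \setminus Nb[x]$ is co-chordal for every $x$, i.e.\ iff $G$ is locally co-chordal; running the same chain of equivalences with ``presentation'' and ``$4$-co-chordal'' in place of ``resolution'' and ``co-chordal'' gives the second statement. The only step that is not pure bookkeeping over the vertex set is the reduction in the first two paragraphs — checking that the extra linear generators contributed by $Nb(x)$ neither create nor destroy linearity of the resolution — and once the ``fresh variable'' remark is in place even this is routine, which is why the statement is merely a corollary.
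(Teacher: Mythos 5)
Your argument is correct and is exactly the route the paper takes: the paper derives the corollary directly from the definitions of local linear resolution/presentation together with Theorems \ref{thm:Froberg} and \ref{thm:Froberg1}, calling it an obvious characterization. The only content beyond bookkeeping is your observation that the linear generators $(y \mid y \in Nb(x))$ in $I(G):x = I(G\setminus Nb[x]) + (y \mid y \in Nb(x))$ involve variables absent from $I(G\setminus Nb[x])$, so tensoring with the Koszul complex on them preserves linearity of the resolution and of the presentation in both directions — a detail the paper leaves implicit and which you have filled in correctly.
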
  

A set of vertices $S$ forms a {\em vertex cover} if every edge in the graph is incident on at least one vertex from the set $S$. It is possible that both the vertices of an edge are in a given vertex cover. A vertex cover is called {\em minimal vertex cover} if no proper subset is a vertex cover. One observes that the Krull Dimension of $S/I(G)$ is the difference between the number of vertices in $G$ and the minimum size of vertex cover of $G$ \cite{deLoera2019average}.  Note that this is the same as the maximum size of an independent set of vertices of $G$ (an independence set is the complement of a vertex cover in a graph and of interest as a concept in its own right). 
\begin{theorem}( \cite[Section 3]{van2013})
\label{t:unmixed}
A graph is unmixed if all minimal vertex covers have same size.
\end{theorem}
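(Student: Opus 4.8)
The plan is to reduce the statement to the standard primary decomposition of a squarefree monomial ideal via the vertex-cover/prime dictionary. For a subset $C \subseteq [n]$ write $P_C = (x_i \mid i \in C)$, a monomial prime ideal with $\text{ht}(P_C) = |C|$. The first step is to observe that $I(G) \subseteq P_C$ if and only if $C$ is a vertex cover of $G$: since $x_ix_j \in P_C$ exactly when $i \in C$ or $j \in C$, containment $I(G) \subseteq P_C$ holds precisely when every edge of $G$ meets $C$. Because $I(G)$ is radical (generated by squarefree monomials), it is the intersection of the primes containing it, and among monomial primes it suffices to intersect the minimal ones; this yields the irredundant decomposition $I(G) = \bigcap_{C} P_C$, where $C$ ranges over the minimal vertex covers of $G$.

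The second step is to read off the associated primes. Since $S/I(G)$ is reduced it has no embedded primes, so $\text{Ass}(S/I(G))$ equals the set of minimal primes of $I(G)$, which by the previous step is exactly $\{P_C : C \text{ a minimal vertex cover of } G\}$. Irredundancy — needed to guarantee that each such $P_C$ genuinely occurs as an associated prime — follows because distinct minimal vertex covers are incomparable under inclusion, hence no $P_C$ contains another $P_{C'}$.

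The third step is purely definitional: per the notion of unmixedness recalled earlier, $I(G)$ is unmixed iff all its associated primes are minimal of the same height. The first condition is automatic here (radicality), and the second reads $|C| = |C'|$ for all minimal vertex covers $C, C'$, since $\text{ht}(P_C) = |C|$. Thus $I(G)$ is unmixed iff all minimal vertex covers of $G$ have the same size, which is the claim (the converse implication is immediate from the same identities).

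There is no serious obstacle: the only points requiring care are the irredundancy of the decomposition over minimal vertex covers and the absence of embedded primes for a radical ideal, both of which are standard. The entire content of the statement is the translation between minimal primes of $I(G)$ and minimal vertex covers of $G$.
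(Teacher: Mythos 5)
Your argument is correct and is the standard one: the paper states this result without proof, citing van Tuyl's notes, where exactly this dictionary (minimal primes of the radical ideal $I(G)$ $\leftrightarrow$ minimal vertex covers, with $\mathrm{ht}(P_C)=|C|$ and no embedded primes since $S/I(G)$ is reduced) is the intended justification. Nothing is missing; you in fact prove the stronger ``if and only if'' form.
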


We note that isolated vetrices do not affect any of the notions we study here. The edge ideal of a graph remains unchanged if one adds a few isolated vertices. So does the properties of chordality, cochordality, 4-chordality, 4-cochordality etc.


\subsection{\ER \, random graphs}
\label{s:probprelims}
Recall the definition of \ER \, random graph from \eqref{d:erg}.
We collect now results regarding random graphs that we need in this paper. The reader may refer to these results when necessary. For more on \ER \, random graphs, please refer to \cite{Frieze2016}.\\

We denote density of a graph $G$ with $n$ vertices and $m$ edges to be  $d(G) = m/n$. A graph is said to be {\em strictly balanced} if the density of the graph itself is strictly greater than the density of any of its subgraphs. It is trivial to see that a complete graph on $k$ vertices is strictly balanced with density $\frac{k-1}{2}$ and a tree on $k$ vertices is strictly balanced with density $\frac{k-1}{k}$. It is also trivial to see that cycles and edges are also strictly balanced subgraphs.  By $N_H(G)$, we denote the number of copies of $H$ in $G$. We will use $\stackrel{D}{\rightarrow}$ to denote convergence in distribution i.e., we say that a sequence of random variables $X_n \stackrel{D}{\rightarrow} X$ if $F_{X_n}(x) \to F_X(x)$ for all $x$ at which $F_X$ is continuous where $F_{X_n}(\cdot)$ and $F_{X}(\cdot)$ denote the CDFs (cumulative distribution functions) of the random variables $X_n$ and $X$ respectively.
\begin{theorem}(\cite[Theorems 5.3 and 5.4]{Frieze2016})
\label{t:denrg}
Let $H$ be a strictly balanced subgraph on $k$ vertices and of density $d := d(H)$. Then for $p = n^{-\alpha}$, $H$ is a subgraph of $G(n,p)$ w.h.p. if $\alpha < \frac{1}{d}$ and if $\alpha > \frac{1}{d}$ then $H$ is not a subgraph of $G(n,p)$ w.h.p. 

Further, if $np^d \to \lambda$ then $N_H(G(n,p)) \stackrel{D}{\rightarrow} Z_{\frac{\lambda^k}{aut(H)}}$ where $aut(H)$ is the number of automorphisms of $H$ and $Z_a$ denotes the Poisson random variable with mean $a$ for $a \in [0,\infty)$. 
\end{theorem}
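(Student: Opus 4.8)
The plan is to prove this classical fact about subgraph counts in $G(n,p)$ by the first- and second-moment methods for the threshold statement and by the method of moments (equivalently, the Stein--Chen method) for the Poisson limit; the strictly balanced hypothesis is used precisely to control the second moment. Write $k=|V(H)|$ and $e_H=|E(H)|$, so that $d=d(H)=e_H/k$. Since the number of copies of $H$ in the complete graph $K_n$ is $\binom{n}{k}k!/aut(H)\sim n^k/aut(H)$ and each such copy lies in $G(n,p)$ with probability $p^{e_H}$, one gets $\mathbb{E}[N_H(G(n,p))]\sim n^k p^{e_H}/aut(H)=(np^d)^k/aut(H)$. For $p=n^{-\alpha}$ this is of order $n^{k(1-\alpha d)}$, hence tends to $0$ when $\alpha>1/d$ and to $\infty$ when $\alpha<1/d$; and if $np^d\to\lambda$ it tends to $\mu:=\lambda^k/aut(H)$. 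In the case $\alpha>1/d$ the conclusion is then immediate from Markov's inequality, $\mathbb{P}[N_H\ge 1]\le\mathbb{E}[N_H]\to 0$, so $H$ is not a subgraph of $G(n,p)$ w.h.p.

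For the case $\alpha<1/d$ I would run the second moment method, showing $\mathbb{E}[N_H^2]=(1+o(1))\mathbb{E}[N_H]^2$, so that $\mathbb{P}[N_H=0]\le\mathrm{Var}(N_H)/\mathbb{E}[N_H]^2\to 0$ by Chebyshev and $H\subseteq G(n,p)$ w.h.p. Expanding $\mathbb{E}[N_H^2]$ as a sum over ordered pairs of copies $(H',H'')$ of $H$ and grouping the summands by the isomorphism type of the overlap $J:=H'\cap H''$, the total contribution of a fixed type has order $n^{2k-|V(J)|}p^{2e_H-|E(J)|}$; dividing by $\mathbb{E}[N_H]^2\asymp n^{2k}p^{2e_H}$ leaves $n^{-|V(J)|}p^{-|E(J)|}$. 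The empty overlap ($|V(J)|=0$, i.e. disjoint copies) reproduces the main term; the diagonal $J=H$ contributes $\mathbb{E}[N_H]=o(\mathbb{E}[N_H]^2)$; and for every remaining $J$, which is a subgraph of $H$ with $1\le|V(J)|\le k$, either $|E(J)|=0$ and the term is $n^{-|V(J)|}\to 0$, or $|E(J)|\ge 1$ and strict balancedness gives $|E(J)|/|V(J)|<d$, so that $\alpha<1/d<|V(J)|/|E(J)|$ and the factor $n^{|V(J)|-\alpha|E(J)|}\to\infty$ forces the term to $0$. Summing over the finitely many overlap types gives $\mathbb{E}[N_H^2]=(1+o(1))\mathbb{E}[N_H]^2$.

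For the Poisson convergence when $np^d\to\lambda$, I would compute factorial moments and invoke the method of moments. For each fixed $r\ge 1$, $\mathbb{E}[(N_H)_r]$ is the expected number of ordered $r$-tuples of distinct copies of $H$ in $G(n,p)$; the tuples of pairwise vertex-disjoint copies contribute $(1+o(1))\mathbb{E}[N_H]^r\to\mu^r$, while any tuple with a nontrivial pairwise overlap is negligible by exactly the strictly-balanced estimate used above (here $np^{|E(J)|/|V(J)|}\to\infty$ for every proper subgraph $J$ of $H$ with at least one edge, since $|E(J)|/|V(J)|<d$). Hence $\mathbb{E}[(N_H)_r]\to\mu^r$ for all $r$, and these are the factorial moments of the $\mathrm{Poisson}(\mu)$ distribution; as the Poisson law is determined by its moments, $N_H(G(n,p))\stackrel{D}{\rightarrow}Z_\mu$.

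The only genuinely nonroutine step is the combinatorial bookkeeping in these moment computations: enumerating the overlap subgraphs $J\subseteq H$ and checking that strict balancedness is exactly the hypothesis that makes $|E(J)|/|V(J)|<d$ for every proper subgraph $J$ with an edge, so that every off-diagonal term vanishes in the limit --- and, correspondingly, that merely balanced (not strictly balanced) $H$ would fail this right at the threshold.
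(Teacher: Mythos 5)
Your argument is correct and is exactly the standard first-moment/second-moment and method-of-moments proof of this classical result; the paper itself offers no proof, citing Theorems 5.3 and 5.4 of \cite{Frieze2016}, where precisely this argument (strict balancedness controlling the overlap terms $n^{-|V(J)|}p^{-|E(J)|}$, and factorial moments converging to $\mu^r$) is carried out. The only cosmetic quibble is the phrasing ``the factor $n^{|V(J)|-\alpha|E(J)|}\to\infty$ forces the term to $0$'': the off-diagonal term is the reciprocal of that factor, which is what you clearly intend.
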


\paragraph{Sparse \ER \, random graphs :} A specific choice of $p$ that shall play an important role in our analysis is $p := p(n) = \min  \{\lambda/n,1\}$ for $\lambda \in (0,\infty)$. In this case, $\EXP{e(n,p)} = \binom{n}{2}p \sim \frac{1}{2}\lambda n$ as $n \to \infty$ where $e(n,p)$ is the cardinality of the edge-set $E(n,p)$. The random graph is called sparse in this regime as the average degree $n^{-1}e(n,p)$ is bounded.  A particular feature of $G(n,p)$ in the sparse regime is that it has very few cycles and it resembles a forest (i.e., every component is a tree). We shall very much rely upon this fact in the proof of Theorem \ref{t:reginp} and also in determining the critical windows for linear resolution and linear presentation in Section \ref{sec:lplr}. A key result towards this is to characterize the joint distribution of cycles in $G(n,p)$ in the sparse regime.
\begin{theorem}(\cite[Theorem 2.15]{Bordenave2016})
\label{t:multPoisson}
For $l \geq 3$, let $C_l$ denote {\em the cycle graph} on $l$ vertices. Let $np \to \lambda \in (0,\infty)$. Then for any $k \geq 3$ and $(a_3,\ldots,a_k) \in \{0,1\}^k$, we have that 
$$ \sum_{i=3}^k a_i N_{C_i}(G(n,p)) \stackrel{D}{\rightarrow} Z_{\sum_{i=3}^k \frac{a_i\lambda^i}{2i}}.$$
\end{theorem}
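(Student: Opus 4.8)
\emph{Proof strategy.}
The plan is to use the classical method of factorial moments. Fix $k\ge 3$ and $a_3,\ldots,a_k\in\{0,1\}$, let $p=p(n)$ satisfy $np\to\lambda\in(0,\infty)$, and set
\[
S_n \ :=\ \sum_{i=3}^{k} a_i\, N_{C_i}\!\bigl(G(n,p)\bigr),
\qquad
m \ :=\ \sum_{i=3}^{k}\frac{a_i\lambda^{i}}{2i}.
\]
Since $S_n$ takes values in $\N$, by the standard factorial-moment criterion for Poisson convergence --- namely, if $\EXP{(X_n)_r}\to m^{r}$ for every $r\in\N$, where $(x)_r:=x(x-1)\cdots(x-r+1)$, then $X_n\stackrel{D}{\rightarrow}Z_m$ --- it is enough to show that $\EXP{(S_n)_r}\to m^{r}$ for each fixed $r$. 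This is precisely the conclusion of the theorem, so the whole problem reduces to this moment computation.

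Next I would carry out the moment computation combinatorially. Writing $J:=\{\,i\in\{3,\ldots,k\}:a_i=1\,\}$ and expanding the falling factorial of the indicator sum $S_n=\sum_{|\gamma|\in J}\mathbf 1[\gamma\subseteq G(n,p)]$ (the sum over cycles $\gamma$ of $K_n$ with length in $J$), one gets
\[
\EXP{(S_n)_r}\ =\ \sum_{\substack{(\gamma_1,\ldots,\gamma_r)\ \text{pairwise distinct}\\ |\gamma_j|\in J}} p^{\,|E(\gamma_1)\cup\cdots\cup E(\gamma_r)|}.
\]
I would split this sum according to whether $\gamma_1,\ldots,\gamma_r$ are pairwise vertex-disjoint. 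For the vertex-disjoint tuples: the number of cycles of a given length $i$ in $K_n$ is $\binom{n}{i}\frac{(i-1)!}{2}\sim\frac{n^{i}}{2i}$, so for a prescribed length sequence $(i_1,\ldots,i_r)\in J^{r}$ a greedy count gives $\bigl(1+o(1)\bigr)\prod_{j}\frac{n^{i_j}}{2i_j}$ vertex-disjoint ordered tuples, each present with probability $p^{\,i_1+\cdots+i_r}=\bigl(1+o(1)\bigr)(\lambda/n)^{\,i_1+\cdots+i_r}$ by edge independence; summing over $(i_1,\ldots,i_r)\in J^{r}$ yields $\bigl(1+o(1)\bigr)\bigl(\sum_{i\in J}\lambda^{i}/(2i)\bigr)^{r}=m^{r}+o(1)$. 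For the overlapping tuples: some two of the cycles share a vertex and hence lie in a common connected component $F_0$ of $F:=\gamma_1\cup\cdots\cup\gamma_r$; as $F_0$ is connected and contains two distinct cycles as subgraphs, $|E(F_0)|\ge|V(F_0)|+1$ (a connected graph with at most $|V|$ edges contains at most one cycle), and since every component of $F$ contains a cycle (so has at least as many edges as vertices) we get $|E(F)|\ge|V(F)|+1$. Because $k$ and $r$ are fixed there are only $O(1)$ isomorphism types of such unions $F$, each with $v:=|V(F)|$ vertices and $e:=|E(F)|\ge v+1$ edges, embedding into $K_n$ in $O(n^{v})$ ways and occurring with probability $\sim(\lambda/n)^{e}$; hence the overlapping tuples contribute $O(n^{\,v-e})=O(n^{-1})=o(1)$. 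Combining the two parts gives $\EXP{(S_n)_r}=m^{r}+o(1)$.

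Putting this together with the factorial-moment criterion gives $S_n\stackrel{D}{\rightarrow}Z_m$, which is the assertion. I expect the only genuinely delicate point to be the overlap estimate: one must enumerate the finitely many ways a union of two or more distinct short cycles can look and confirm the strict edge--vertex excess $|E(F)|\ge|V(F)|+1$ in every case; all the remaining asymptotics are uniform precisely because $k$ and $r$ are held fixed. (An alternative route, which additionally yields explicit total-variation rates, is the Chen--Stein method for Poisson approximation applied to the indicator sum over potential cycles; but the method of moments is the shortest path to the stated convergence in distribution.)
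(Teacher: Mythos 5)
The paper does not prove this statement; it is imported verbatim as \cite[Theorem 2.15]{Bordenave2016}, so there is no internal proof to compare against. Your method-of-(factorial-)moments argument is correct and is the canonical proof of this classical fact: the disjoint-tuple count gives the main term $m^r$, and your edge--vertex excess bound $|E(F)|\geq |V(F)|+1$ for any union of distinct short cycles with a shared vertex correctly kills the overlapping tuples at rate $O(n^{-1})$, which is essentially how the cited source (and the standard random-graph literature) establishes the result.
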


\paragraph{Galton-Watson tree :} The intuition that $G(n,p)$ "locally resembles" a forest is made precise via the theory of local weak convergence (or Benjamini-Schramm convergence). It is shown that the \ER \, random graph "locally resembles" a Galton-Watson tree $GW(\lambda)$ with Poisson($\lambda$) offspring distribution. We shall describe this tree first informally now. Let $\emptyset$ denote the root vertex. This vertex has a Poisson ($\lambda$) number of neighbours. Each of these vertices have a further Poisson  ($\lambda$) number of new neighbours and independent of each other. Further, none of the new neighbours are common. These new neighbours also have a Poisson  ($\lambda$) number of newer neighbours and so on. \\

We now define the Galton-Watson tree formally but in a terse manner (see \cite[Section 3.4]{Bordenave2016} for more details). First, we set-up some notation for infinite trees. Define $\N^f := \cup_{k \geq 0}\N^k$ with $\N^0 := \{\emptyset\}$. For $\bi = (i_1,\ldots,i_k) \in \N^f$, we call $(i_1,\ldots,i_{k-1})$ the {\em ancestor} of $\N$. Given a non-negative integer-valued sequence $\{N_{\bi}\}_{\bi \in \N^f}$, we set the vertex set of an infinite tree as
$$V = \{\emptyset \} \cup \{ \bi = (i_1,\ldots,i_k) : \forall 1 \leq l \leq k, 1 \leq i_l \leq N_{(i_1,\ldots,i_{l-1})} \}.$$
We call $(\bi,1),\ldots,(\bi,N_{\bi})$ to be {\em children} or {\em off-springs} of $\bi$. The infinite rooted tree $T$ is defined as the tree with vertex set $V$, $\emptyset$ as the root and undirected edges between vertices and their ancestors. By definition for all $\bi \in V$ with $i \neq \emptyset$, the ancestor of $i$ is also in $V$ and further, the degree of such a vertex $\bi$ is $N_{\bi} + 1$. The degree of root $\emptyset$ is $N_{\emptyset}$. \\

The Galton-Watson tree with Poisson($\lambda$) offspring distribution ( denoted by $GW(\lambda)$) is simply the rooted tree with the sequence $\{N_{\bi}\}_{\bi \in \N^f}$ chosen to be i.i.d. (independent and identically distributed) Poisson($\lambda$) random variables. \\

We refer the reader interested in more details about local weak convergence to \cite{Bordenave2016,Hofstad20,Aldous04}. We shall exploit the fact that limiting  structure of  the \ER \, random graph is a forest to prove certain asymptotics for regularity of the edge ideal of \ER \, random graphs. Though we will be using some results derived from local weak convergence theory, we avoid introducing this here as it is not necessary to understand our results or proofs. \\

\paragraph{A Concentration inequality for Lipschitz functionals :} We shall now state a concentration inequality for Lipschitz graph functionals that we need. Let $\psi$ be a {\em graph functional} i.e., $\psi : \mathcal{G} \to \R$ where $\mathcal{G}$ is the collection of all locally-finite graphs and $\psi(G_1) = \psi(G_2)$ if $G_1$ and $G_2$ are isomorphic. Suppose that $\psi$ is {\em Lipschitz} under vertex-addition i.e., there exists $M > 0$ such that for any graph $G$ and $v \in G$, we have that
\begin{equation}
\label{e:lip}
| \psi(G) - \psi(G \setminus v)| \leq M
\end{equation}
Then, for graphs $G_1$ and $G_2$ with same vertex set and $G_1 - v = G_2 - v$, we have that
$$ |\psi(G_1) - \psi(G_2)| \leq | \psi(G_1) - \psi(G_1 \setminus v)| + | \psi(G_2) - \psi(G_2 \setminus v)| \leq 2M.$$
Now, by using Mcdiarmid's inequality (see \cite[Lemma 21.16]{Frieze2016}) as in \cite[Theorem 7.8]{Frieze2016}, we can derive that for $t > 0$,
\begin{equation}
\label{e:mcdiarmidconc}
 \pr{ |\psi(G(n,p)) - \EXP{\psi(G(n,p))}| \geq t} \leq 2\exp\{-\frac{t^2}{4nM^2}\}.
\end{equation}
As a consequence we have that $\limsup n^{-1}\VAR{\psi(G(n,p))} < \infty$ and further by Chebyshev's inequality, we can derive that
\begin{equation}
\label{e:conc_lipfunctionals}
\lim_{n \to \infty} \frac{\psi(G(n,p)) - \EXP{\psi(G(n,p))}}{n} = 0, \, \, \mbox{a.s..}
\end{equation}

However, the above result is not sufficient to conclude a strong law for Lipschitz functionals as we do not have convergence of expectations. We need further assumptions (for example, see \cite[Theorem 1]{Salez2016}) for the same.  But we shall state two strong laws that we have used in Remark \ref{r:ermanyangcomp2}. Let $\beta_0(G)$ and $\hat{\beta}_0(G)$ be the number of connected components and number of non-trivial connected components (i.e., size is at least $2$) respectively.
Denoting the number of isolated vertices by $N_0(G)$, observe that
$$ \beta_0(G) = N_0(G) + \hat{\beta}_0(G).$$
Further, it is easy to check that all three functionals are Lipschitz but under further assumptions, a strong law for these functionals was proven for $G(n,\lambda/n)$ (see examples in \cite[Theorem 1]{Salez2016} and also remarks below Theorem 1 therein) : There exist constants $\beta_{\infty}(\lambda), \hat{\beta}_{\infty}(\lambda) \in (0,\infty)$ such that
\begin{equation}
\label{e:betainf}
\beta_{\infty}(\lambda) = \lim_{n \to \infty} n^{-1} \beta_0(G(n,p)) \, \, \mbox{a.s.},
\end{equation}
and
\begin{equation}
\label{e:betahinf}
\hat{\beta}_{\infty}(\lambda) = \lim_{n \to \infty} n^{-1} \hat{\beta}_0(G(n,p)) \, \, \mbox{a.s.}.
\end{equation}
The above convergences also hold in expectation. Further, since $\EXP{N_0(G(n,p)} = n(1-p)^{n-1}$, we have that 
$$ \lim_{n \to \infty} n^{-1} \EXP{N_0(G(n,p)} = e^{-\lambda}.$$
and so $\hat{\beta}_{\infty}(\lambda)  = \beta_{\infty}(\lambda) - e^{-\lambda}$ by the earlier relation between $\beta_0, \hat{\beta}_0$ and $N_0$. \\

We will now present an extension of \eqref{e:mcdiarmidconc} that shall be useful in extending the scope of our applications. 
\begin{lemma}
\label{l:conc_nearlip}
Let $p = \frac{\lambda}{n}$ for $\lambda > 0$. Let $\Psi$ be a graph functional that satisfies \eqref{e:lip} with $\Delta(G)$ instead of $M$ i.e., for all $v \in G$, 
$$ | \psi(G) - \psi(G\setminus v)| \leq \Delta(G) := \max_{u \in G}deg(u).$$
Then, we have that for $n \in \N$ and any $t > 0, M \in \N$,
\begin{align*}
\pr{|\psi(G(n,p)) - \EXP{\psi(G(n,p))}| > t} \leq 2\exp\{-\frac{t^2}{4nM^2}\} + 2n^2 \left(\frac{\lambda e}{M} \right)^M.
\end{align*}
Further,  
$$\lim_{n \to \infty} \frac{\psi(G(n,p)) - \EXP{\psi(G(n,p))}}{n} = 0, \, \, \mbox{a.s..}$$
\end{lemma}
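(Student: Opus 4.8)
The plan is to reduce the near-Lipschitz functional $\psi$ to a genuinely Lipschitz one on a high-probability event, namely the event that the maximum degree $\Delta(G(n,p))$ is bounded by a suitable constant $M$. On this event the hypothesis $|\psi(G)-\psi(G\setminus v)|\le \Delta(G)$ becomes $|\psi(G)-\psi(G\setminus v)|\le M$, so one is in the setting of \eqref{e:mcdiarmidconc}. The obstacle is that the degree bound can be violated, and McDiarmid's inequality as quoted needs a \emph{global} Lipschitz bound; so one must be careful to truncate or couple appropriately rather than applying \eqref{e:mcdiarmidconc} verbatim to $\psi$.

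First I would establish the degree tail bound: for $p=\lambda/n$, a fixed vertex has $\mathrm{Binomial}(n-1,\lambda/n)$ degree, so $\pr{\deg(v)\ge M}\le \binom{n-1}{M}p^M \le (\lambda e/M)^M$ by the standard bound $\binom{n-1}{M}\le ((n-1)e/M)^M$. A union bound over the $n$ vertices gives $\pr{\Delta(G(n,p))\ge M}\le n(\lambda e/M)^M$; I will actually use the slightly looser $n^2(\lambda e/M)^M$ since that is what appears in the statement (the extra factor of $n$ gives room and keeps constants clean). Next, on the event $A_M := \{\Delta(G(n,p)) < M\}$ one wants to say $\psi$ agrees with a truly $M$-Lipschitz functional. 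The cleanest route: define $\tilde\psi(G) := \psi(G)$ if $\Delta(G) < M$ and otherwise modify $\psi$ (e.g., freeze it) so that $\tilde\psi$ is $M$-Lipschitz under vertex deletion everywhere; then $\tilde\psi$ satisfies \eqref{e:lip} with constant $M$, so by the argument preceding \eqref{e:mcdiarmidconc} (two vertex-deletions) $\tilde\psi$ changes by at most $2M$ under a single edge resampling, and McDiarmid gives $\pr{|\tilde\psi(G(n,p))-\EXP{\tilde\psi(G(n,p))}|\ge t}\le 2\exp\{-t^2/(4nM^2)\}$. Since $\psi=\tilde\psi$ on $A_M$ and $\pr{A_M^c}\le n^2(\lambda e/M)^M$ (which also bounds $|\EXP{\psi}-\EXP{\tilde\psi}|$ up to harmless factors, provided $\psi$ has at most polynomial growth so the contribution of $A_M^c$ to the expectation is negligible — this is automatically true for regularity, projective dimension and depth, which are bounded by $n$), one assembles
\begin{align*}
\pr{|\psi(G(n,p)) - \EXP{\psi(G(n,p))}| > t} &\le \pr{|\tilde\psi(G(n,p)) - \EXP{\tilde\psi(G(n,p))}| > t/2} + \pr{A_M^c} + \mathbf{1}\{|\EXP{\psi}-\EXP{\tilde\psi}|>t/2\}\\
&\le 2\exp\{-t^2/(4nM^2)\} + 2n^2(\lambda e/M)^M,
\end{align*}
after absorbing constants; matching the displayed bound may require choosing $M$ so that the expectation-correction term is dominated, which is where I expect to fiddle with constants. (Alternatively, and perhaps more simply, one bounds directly $\pr{|\psi-\EXP\psi|>t}\le \pr{A_M^c} + \pr{\{|\psi-\EXP\psi|>t\}\cap A_M}$ and on $A_M$ runs McDiarmid with the conditional bound; the truncation device is just a way to make this rigorous.)

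For the almost-sure statement, fix $\varepsilon>0$ and take $t=\varepsilon n$. The Gaussian term becomes $2\exp\{-\varepsilon^2 n/(4M^2)\}$, which is summable in $n$ for any fixed $M$. The remaining term is $2n^2(\lambda e/M)^M$, which does \emph{not} decay in $n$ for fixed $M$; so the key point is that $M$ is a free parameter and I will let it grow slowly with $n$. Choosing, say, $M=M(n)=\lceil \log n\rceil$, one gets $n^2(\lambda e/\log n)^{\log n} = n^2 e^{-\log n(\log\log n - \log(\lambda e))} = n^{2 - \log\log n + O(1)} \to 0$ faster than any polynomial, hence summable; meanwhile $\exp\{-\varepsilon^2 n/(4(\log n)^2)\}$ is still summable. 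So $\sum_n \pr{|\psi(G(n,p))-\EXP{\psi(G(n,p))}|>\varepsilon n} < \infty$, and Borel--Cantelli gives $n^{-1}(\psi(G(n,p))-\EXP{\psi(G(n,p))})\to 0$ a.s.

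The main obstacle is the first paragraph's truncation bookkeeping: making precise that the near-Lipschitz $\psi$ can be replaced, on the high-probability bounded-degree event, by a globally $M$-Lipschitz functional without distorting either the concentration or the mean — in particular controlling $|\EXP{\psi}-\EXP{\tilde\psi}|$. For the applications (regularity, projective dimension, depth) this is painless because $0\le\psi(G)\le |V(G)|$, so the mean correction is at most $n\cdot\pr{A_M^c}$, which is super-polynomially small with the choice $M=\lceil\log n\rceil$ and thus absorbed; the concentration inequality as stated (with fixed $M$) then follows, and the a.s. convergence follows by letting $M$ grow as above.
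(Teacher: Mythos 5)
Your overall architecture is the same as the paper's --- a Gaussian term from a bounded-differences inequality plus a bad-event term coming from the tail of $\Delta(G(n,p))$, followed by letting $M=M(n)\to\infty$ slowly and applying Borel--Cantelli --- but the mechanism by which you handle the unbounded differences is genuinely different. The paper does not truncate $\psi$ at all: it runs the vertex-exposure martingale $\EXP{f(Y_1,\ldots,Y_{n-1})\mid Y_1,\ldots,Y_k}$ directly and invokes an Azuma-type inequality with exceptional events (\cite[Theorem 8.4]{Chung2006}), which yields the Gaussian term plus $\sum_{k}\pr{|\mbox{$k$-th martingale difference}|\geq 2M}$; each difference is then bounded by $\Delta(G(n,p))+\Delta(G'(n,p))$ for the resampled graph, and the binomial tail bound gives the $2n^2(\lambda e/M)^M$ exactly (the extra factor of $n$ comes from summing over the $n-1$ coordinates, not from slack in the union bound as you guessed). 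This route produces the displayed constants verbatim and requires no control of $|\EXP{\psi}-\EXP{\tilde\psi}|$.

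Your truncation route can be made to work, but the step you describe as ``freeze it'' hides the one real difficulty. For $\tilde\psi$ to be usable in McDiarmid's inequality you need the \emph{per-coordinate} bounded-difference property (w.r.t.\ the vertex-exposure coordinates $Y_k$) to hold globally while $\tilde\psi$ agrees with $\psi$ on $\{\Delta<M\}$. The McShane/Kutin-type extension $\tilde\psi(y)=\inf_{z\in\mathrm{Good}}\bigl(\psi(z)+2M\sum_k\1[y_k\neq z_k]\bigr)$ agrees with $\psi$ on the good set only if $\psi$ restricted to the good set is Lipschitz for the weighted Hamming metric, and this does \emph{not} follow immediately from the per-coordinate bounds on the good set: interpolating between two bounded-degree configurations coordinate by coordinate can pass through configurations with $\Delta\geq M$. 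This is exactly the issue addressed by the ``method of typical bounded differences'' (Warnke) or Kutin's extension of McDiarmid, and invoking one of those would close the gap --- but at that point you are using a tool of the same genus as the one the paper cites. There is also the quantitative shortfall you already flagged: the $t/2$ splits and the mean-correction term $|\EXP{\psi}-\EXP{\tilde\psi}|\leq Cn^2\pr{\Delta\geq M}$ mean you recover an inequality of the stated \emph{form} but not with the stated constants for all $t$ and fixed $M$. None of this affects the almost-sure statement, whose proof via $M(n)=\lceil\log n\rceil$ (the paper uses $\lfloor n^{\epsilon}\rfloor$, which works equally well) and Borel--Cantelli is correct.
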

\begin{proof}
We define random elements $Y_1,\ldots,Y_{n-1}$ as follows : 
$$ Y_i := \{ (i,j) : i < j, (i,j) \in E(n,p) \}.$$
By the definition of the \ER \, random graph, $Y_1,\ldots,Y_n$ are independent random elements. Observe that we can write $G(n,p) = g(Y_1,\ldots,Y_{n-1})$ for a measurable function $g$ and so $\psi(G(n,p)) = f(Y_1,\ldots,Y_{n-1})$ for a suitably measurable function $f$. Then using the fact that \\ $\EXP{f(Y_1,\ldots,Y_{n-1}) \mid Y_1,\ldots,Y_k}, k = 0,\ldots,n$ is a martingale sequence and \cite[Theorem 8.4]{Chung2006}, we have that for any $t, M > 0$,
\begin{align}
& \pr{|\psi(G(n,p)) - \EXP{\psi(G(n,p))}| > t} \no \\
& \leq \quad \pr{|f(Y_1,\ldots,Y_{n-1}) - \EXP{f(Y_1,\ldots,Y_{n-1})}| > t}  \leq 2\exp\{-\frac{t^2}{4nM^2}\} \no \\
\label{e:conc_nearlip} & \quad + \sum_{k=1}^{n-1}\pr{|\EXP{f(Y_1,\ldots,Y_{n-1}) \mid Y_1,\ldots,Y_k} - \EXP{f(Y_1,\ldots,Y_{n-1}) \mid Y_1,\ldots,Y_{k-1}}| \geq 2M}.
\end{align}
Denoting by $Y'_1,\ldots,Y'_{n-1}$ independent copies of $Y_1,\ldots,Y_{n-1}$, we have that
\begin{align*}
& |\EXP{f(Y_1,\ldots,Y_{n-1}) \mid Y_1,\ldots,Y_k} - \EXP{f(Y_1,\ldots,Y_{n-1}) \mid Y_1,\ldots,Y_{k-1}}| \\
& \quad \leq \EXP{|f(Y_1,\ldots,Y_{n-1}) - f(Y_1,\ldots,Y_{k-1},Y'_k,\ldots,Y_{n-1})| \mid Y_1,\ldots,Y_{k-1}}.
\end{align*}
Let $G'(n,p) = g(Y_1,\ldots,Y_{k-1},Y'_k,\ldots,Y_{n-1})$. Observe that $G(n,p) \setminus \{k\} - G(n,p) \setminus \{k\}$ and so as reasoned for Lipschitz under vertex-addition, 
$$ |f(Y_1,\ldots,Y_{n-1}) - f(Y_1,\ldots,Y_{k-1},Y'_k,\ldots,Y_{n-1})| = |\psi(G(n,p)) - \psi(G'(n,p))| \leq \Delta(G(n,p)) + \Delta(G'(n,p)).$$
Let $D(i)$ denote the degree of the vertex $i$ in $G(n,p)$ and choose $M \in \N$. Observe that $D(i), i =1,\ldots,n$ are identically distributed Binomial($n-1,p$) random variables. Using that $G(n,p) \stackrel{d}{=} G'(n,p)$ and substituting the above bound in \eqref{e:conc_nearlip}, we obtain that for all $k = 1,\ldots,n-1$,
\begin{align*}
& \pr{|\EXP{f(Y_1,\ldots,Y_{n-1}) \mid Y_1,\ldots,Y_k} - \EXP{f(Y_1,\ldots,Y_{n-1}) \mid Y_1,\ldots,Y_{k-1}}| \geq 2M} \\
& \quad \leq \pr{\Delta(G(n,p)) \geq M} +  \pr{\Delta(G'(n,p)) \geq M} \\
& \quad = 2\pr{\Delta(G(n,p)) \geq M} \leq 2 \sum_{i=1}^n \pr{D(i) \geq M} =2n \pr{D(1) \geq M} \\
& \quad \leq 2n\binom{n-1}{M}\left( \frac{\lambda}{n} \right)^M \leq 2n \left(\frac{\lambda e}{M} \right)^M
\end{align*}
This proves the concentration inequality. 

Now,  by choosing $M = \lfloor n^{\epsilon} \rfloor$ for $\epsilon > 0$ small enough,  we have that for any $t > 0$, 
$$ \sum_{n \geq 1} \pr{|\psi(G(n,p)) - \EXP{\psi(G(n,p))}| > tn} < \infty,$$
and hence the a.s. \, convergence follows from Borel-Cantelli Lemma. 
\end{proof}

\section{Proofs}
\label{sec:EIRG}
Recall that $G(n,p)$ is the \ER \, random graph on $n$ vertices with edge probability $p$ and $I(n,p) = I(G(n,p))$, the edge-ideal generated by $G(n,p)$. 
\subsection{Proofs of Theorems \ref{t:lrlp} and \ref{t:lrlplambda}.}
\label{sec:proofslrlp}
 
\begin{proof}[Proof of Theorem \ref{t:lrlp}]
By Theorems \ref{thm:Froberg} and \ref{thm:Froberg1} and that $G(n,p)^c \stackrel{d}{=} G(n,1-p)$, it suffices to show that
\begin{equation}
\label{e:Gnp4chordal}
\lim_{n \to \infty} \pr{\mbox{$G(n,p)$ is $4$-co-chordal}} = 
\lim_{n \to \infty} \pr{\mbox{$G(n,1-p)$ is $4$-chordal}} =
\begin{cases} 
1 \, \, \, \, \mbox{if \, $(n(1-p))^4p^2 \to 0$}, \\
0 \, \, \, \, \mbox{if \, $(n(1-p))^4p^2 \to \infty$},\\
\end{cases}
\end{equation}
 and also for any sequence $p := p(n)$ such that $n(1-p) \to b \in \{0,\infty\}$, we have that
\begin{equation}
\label{e:Gnpchordal}
\lim_{n \to \infty} \pr{\mbox{$G(n,1-p)$ is not chordal but $4$-chordal}} = 0.
\end{equation}
Let $C'_k(n,p)$ denote the number of cycles in $G(n,p)$ of length $k$ without a chord. Note the following
\begin{align}
\{\mbox{$G(n,1-p)$ is not $4$-chordal}\} & = \{C'_4(n,1-p) \geq 1\} \no \\
\{\mbox{$G(n,1-p)$ is not chordal}\} & = \cup_{k \geq 4} \{C'_k(n,1-p) \geq 1\} \no 
\end{align}
Observe that
$$C'_k(n,1-p) := \frac{1}{2k}\sum^{\neq}_{i_1,\ldots,i_k} \1[\mbox{$i_1,\ldots,i_k$ form a cycle in $G(n,1-p)$ without a chord}],$$
where $\sum^{\neq}$ denotes that the sum is over distinct indices and $2k$ is to account for the fact that there are $2k$ ordered $k$-tuples giving rise to the same cycle. Now, by linearity of expectations and also that the indicator random variables above are identically distributed, we have that 
\begin{align}
\EXP{C'_k(n,1-p)} &= \frac{(k-1)!}{2} \binom{n}{k} \pr{\mbox{$1,\ldots,k$ form a cycle in $G(n,1-p)$ without a chord}} \no \\
& = \frac{(k-1)!}{2} \binom{n}{k}(1-p)^kp^{\binom{k}{2}-k}. \label{e:expck}
\end{align}
Now using Markov's inequality, we trivially obtain that
$$ \pr{\mbox{$G(n,1-p)$ is not $4$-chordal}} \leq \EXP{C'_4(n,1-p)} = 3 \binom{n}{4} (1-p)^4 p^2 \to 0$$
as $n \to \infty$ if $(n(1-p))^4p^2 \to 0$. This completes the first part of \eqref{e:Gnp4chordal}.\\

We shall now prove the second part of \eqref{e:Gnp4chordal} via the second moment method. Assume that $(n(1-p))^4p^2 \to \lambda \in (0,\infty]$. From the standard second moment bound, we have that
\begin{equation}
\label{e.secmom}
 \pr{\mbox{$G(n,1-p)$ is not $4$-chordal}} = \pr{C'_4(n,1-p) \geq 1} \geq \frac{\EXP{C'_4(n,1-p)}^2}{\EXP{C'_4(n,1-p)^2}}.
 \end{equation}
We shall now upper bound $\EXP{C'_4(n,1-p)^2}$. In a similar manner as we derived an expression for $\EXP{C'_k(n,1-p)}$, we obtain that
\begin{align*}
\EXP{C'_4(n,1-p)^2} & = \frac{1}{64}\sum^{\neq}_{i_1,i_2,i_3,i_4}\sum^{\neq}_{j_1,j_2,j_3,j_4} \\
& \pr{\mbox{Both $i_1,i_2,i_3,i_4$ and $j_1,j_2,j_3,j_4$ form cycles in $G(n,1-p)$ without a chord}} .
\end{align*}
Writing for $j = 0,1,\ldots,4$,
\begin{align*}
T_j & = \frac{1}{64}\sum^{\neq}_{I = \{i_1,i_2,i_3,i_4\}}\sum^{\neq}_{J= \{j_1,j_2,j_3,j_4\}, |I \cap J| = j} \\
& \pr{\mbox{Both $i_1,i_2,i_3,i_4$ and $j_1,j_2,j_3,j_4$ form cycles in $G(n,1-p)$ without a chord}},
\end{align*}
we have that
$$  \EXP{C'_4(n,1-p)^2}  = \sum_{j=0}^4T_j.$$
By \eqref{e.secmom}, the proof of second part of \eqref{e:Gnp4chordal} is complete if we show that for $c = \infty$
\begin{equation}
\label{e:Tjbds}
\lim_{n \to \infty}\frac{T_0}{\EXP{C'_4(n,1-p)}^2} = 1, \lim_{n \to \infty}\frac{T_j}{\EXP{C'_4(n,1-p)}^2} = 0, j = 1,2,3,4.
\end{equation}
For the first part of \eqref{e:Tjbds}, by proceeding as in the derivation of the expectation, we have that
$$T_0 = 9\binom{n}{4}\binom{n-4}{4}(1-p)^8p^4.$$
Similarly, we obtain for $T_1$ that
$$T_1 = 9\binom{n}{4}\binom{n-4}{3}(1-p)^8p^4 \leq \frac{4}{n-7}T_0.$$
However for $T_2$ we have to consider two possibilities : The two common vertices could be sharing an edge or they might not be sharing an edge. 
$$T_2 = C_1\binom{n}{4}\binom{n-4}{2}(1-p)^7p^4 + C_2\binom{n}{4}\binom{n-4}{2}(1-p)^8p^3,$$
for some constants $C_1,C_2$. The computations for $T_3,T_4$ proceed along similar lines and thus we obtain 
$$T_3 \leq Cn^5(1-p)^6p^3 = C\frac{T_0}{n^3(1-p)^2p},$$
and
$$T_4  = 3\binom{n}{4}(1-p)^4p^2 = \EXP{C'_4(n,1-p)},$$
where $C$ is some constant.  If $\lambda = \infty$, the above bounds along with \eqref{e:expck} suffice to show that \eqref{e:Tjbds} holds and hence we obtain the second part of \eqref{e:Gnp4chordal} as well. \\

Now to prove \eqref{e:Gnpchordal}, we only need to consider the case that $(n(1-p))^4p^2 \to \lambda \in [0,\infty)$ because if $(n(1-p))^4p^2 \to \infty$ then by the second part of \eqref{e:Gnp4chordal}, \eqref{e:Gnpchordal} holds trivially. Now, we derive by Markov's inequality and \eqref{e:expck} that
\begin{align}
\pr{\mbox{$G(n,1-p)$ is not chordal but is $4$-chordal}}  & = \pr{\cup_{k \geq 5} \{C'_k(n,1-p) \geq 1\} \cap \{C'_4(n,1-p) = 0\}} \no \\
& \leq   \pr{\cup_{k \geq 5} \{C'_k(n,1-p) \geq 1\} } \no \\
& \leq \sum_{k \geq 5} \EXP{C'_k(n,1-p)} \no \\
& = \sum_{k \geq 5} \frac{(k-1)!}{2} \binom{n}{k}(1-p)^kp^{\binom{k}{2}-k} \no \\
& \leq (n(1-p))^4p^2 \sum_{k \geq 1} \frac{(n(1-p))^k p^{\binom{k+4}{2}-k-6}}{2(k+4)} \no \\
& =  (n(1-p))^4p^2 \sum_{k \geq 1} \frac{(n(1-p)p^{(k+5)/2})^k}{2(k+4)} \label{e:sumeck}
\end{align}
Let $\lambda = 0$. Then we can easily see that for $n$ large, $n(1-p)p^{5/2} < 1$. Now, let $\lambda >0$ and since $\lim_{n \to \infty} n(1-p) \notin (0,\infty)$, we have that $p \to 0$ and $n^4p^2 \to \lambda$. Thus again for $n$ large, $n(1-p)p^{5/2} < 1$. Hence in all the cases for large $n$, we have from the above derivation that
$$ \pr{\mbox{$G(n,1-p)$ is not chordal but is $4$-chordal}}  \leq (n(1-p))^4p^2 \left( \frac{n(1-p)p^{5/2}}{1 - n(1-p)p^{5/2}} \right) \to 0,$$
as $n \to \infty$ since $n(1-p)p^{5/2} \to 0$.
\end{proof}
\begin{proof}[Proof of Theorem \ref{t:lrlplambda}]
Assume that $(n(1-p))^4p^2 \to \lambda \in (0,\infty)$ and $p \to 0$. Then we have that $n\sqrt{p} \to \lambda^{1/4}$. Denoting the number of edges in $G(n,p)$ by $E(n,p)$, we have from Theorem \ref{t:denrg} that $E(n,p) \stackrel{D}{\rightarrow} Z_{\frac{\sqrt{\lambda}}{2}}$ where $Z_a$ is the Poisson random variable with mean $a$ for $a \in [0,\infty)$. Suppose we denote the event $\{ \mbox{$G(n,p)$ contains a connected subgraph of more than two vertices}\}$ by $A(n,p)$. Then, from Theorem \ref{t:denrg}, we derive that $\pr{A(n,p)} \to 0$ as $n \to \infty$. Hence, w.h.p. $G(n,p)$ consists of $E(n,p)$ many disjoint edges. Observe that if the graph consists of disjoint edges then it is $4$-cochordal iff there is at most $1$ edge i.e.,
$$ \{ \mbox{$G(n,p)$ is $4$-cochordal} \} \cap A(n,p)^c = \{ E(n,p) < 2 \} \cap A(n,p)^c.$$
Thus combining the above identity with the distributional convergence of $E(n,p)$, we can derive that
\begin{align*}
 \pr{\mbox{$G(n,p)$ is $4$-cochordal}} & = \pr{E(n,p) < 2} - \pr{\{E(n,p) < 2\} \cap A(n,p)} \\
 & \, \, \, \, \, + \pr{\{\mbox{$G(n,p)$ is $4$-cochordal}\} \cap A(n,p)} \\
 &\to \pr{Z_{\frac{\sqrt{\lambda}}{2}} < 2}  = e^{-\frac{\sqrt{\lambda}}{2}}(1 + \frac{\sqrt{\lambda}}{2}), \, \mbox{as $n \to \infty$}.
\end{align*}
Now from Theorem \ref{thm:Froberg}, Theorem \ref{thm:Froberg1} and \eqref{e:Gnpchordal}, the proof of \eqref{e:cvgprlrlp1} is complete. \\

Now let $(n(1-p))^4p^2 \to \lambda \in (0,\infty)$ and $p \to 1$. Define $\hat{C}_k(n,1-p)$ to be the number of copies of $k$-cycle $C_k$ in $G(n,1-p)$ i.e.,
$$\hat{C}_k(n,1-p) := \frac{1}{2k}\sum^{\neq}_{i_1,\ldots,i_k} \1[\mbox{$i_1,\ldots,i_k$ form a cycle in $G(n,1-p)$}].$$
Though we have used $N_{C_k}$ instead of $\hat{C}_k$ before, we shall use $\hat{C}_k$ for convenience of notation. Trivially we have that $C'_k(n,1-p) \leq \hat{C}_k(n,1-p)$ and further since $p \to 1$, from \eqref{e:expck}, we obtain that for all $ k \geq 4$,
\begin{equation}
\label{e:ckc'k}
\EXP{\hat{C}_k(n,1-p) - C'_k(n,1-p)} = \frac{(k-1)!}{2}\binom{n}{k}(1-p)^k(1 - p^{\binom{k}{2}-k}) \to 0.
\end{equation}
Then by Theorem \ref{t:denrg} and Slutsky's lemma, we have that $C'_k(n,1-p) \stackrel{D}{\to} Z_{\lambda/8}$. Thus, we obtain that
$$ \pr{\mbox{$G(n,p)$ is $4$-cochordal}} = \pr{C'_k(n,1-p) = 0} \to e^{-\lambda/8}$$
and the proof of the first statement in \eqref{e:cvgprlrlp2} is complete by Theorem \ref{thm:Froberg1}. 

Observe that
\begin{equation}
\label{e:cochordsumck}
\pr{\mbox{$G(n,p)$ is cochordal}} = \pr{\sum_{k=4}^\infty C'_k(n,1-p) = 0} 
\end{equation}
Now from Theorem \ref{t:multPoisson}, \eqref{e:ckc'k} and Slutsky's lemma, we have that for any $m \geq 4$
$$ \sum_{k=4}^m C'_k(n,1-p) \stackrel{D}{\to} Z_{\sum_{k = 4}^m \frac{\lambda^{k/4}}{2k}}.$$
Thus, we can derive that for any $m \geq 1$, 
\begin{equation}
\label{e:ck0cvg}
\lim_{n \to \infty}\pr{\sum_{k=4}^m C'_k(n,1-p) = 0} = e^{-\sum_{k = 4}^m \frac{\lambda^{k/4}}{2k}}.
\end{equation}
Now, assume that  $\lambda < 1$. Using the fact that the events $\{\sum_{k=4}^m C'_k(n,1-p) = 0\}$ are decreasing in $m$,  Markov's inequality, \eqref{e:expck} and following the derivation as in \eqref{e:sumeck}, we have that for any $m \geq 1$, 
\begin{align*}
0 & \leq \pr{\sum_{k=4}^m C'_k(n,1-p) = 0} - \pr{\sum_{k=4}^\infty C'_k(n,1-p) = 0} \\ 
& = \pr{\sum_{k > m} C'_k(n,1-p) \geq 1} \leq \sum_{k > m}\EXP{C'_k(n,1-p)} \\
& \leq (n(1-p))^mp^{(1+ m^2 +5m)/2} \sum_{k \geq 1} \frac{(n(1-p)p^{(k+5)/2})^k}{2(k+4)}
\end{align*}
Since $n(1-p) \to \lambda^{1/4}$ and $\lambda < 1$, we can make $n(1-p)^m$ arbitrarily small by choosing $m$ large and also the final sum in the above derivation is finite. Thus for any $\epsilon >0$, we can find $m$ large such that 
$$ \limsup_{n \to \infty} \left( \pr{\sum_{k=4}^m C'_k(n,1-p) = 0} - \pr{\sum_{k=4}^\infty C'_k(n,1-p) = 0} \right) \leq \epsilon.$$
Now combining the above bound with \eqref{e:ck0cvg}, we obtain that for any $\epsilon > 0$, we can choose $m$ large such that 
$$ \limsup_{n \to \infty} \left( e^{-\sum_{k = 4}^m \frac{\lambda^{k/4}}{2k}} - \pr{\sum_{k=4}^\infty C'_k(n,1-p) = 0} \right) \leq \epsilon.$$
This along with \eqref{e:cochordsumck} and Theorem \ref{thm:Froberg} completes the proof of the second statement in \eqref{e:cvgprlrlp2} for $\lambda < 1$.

Suppose that $\lambda \geq 1$. Then, for any $m \geq 1$ we have by the decreasing property and \eqref{e:ck0cvg} that
$$
\pr{\sum_{k=4}^\infty C'_k(n,1-p) = 0} \leq \pr{\sum_{k=4}^m C'_k(n,1-p) = 0}  \to e^{-\sum_{k = 4}^m \frac{\lambda^{k/4}}{2k}},$$
as $n \to \infty$. Now if $\lambda \geq 1$, the last term can be made arbitrarily small by choosing $m$ large and this completes the proof of  \eqref{e:cvgprlrlp2} for $\lambda \geq 1$ in the same manner as above.
\end{proof}
Recall the notions of local linear presentation and resolution defined in Section \ref{s:EI}. In light of the above results, it begs the question whether local linear resolution implies linear presentation for random graphs or equivalently linear resolution. We show that this need not be the case and explicitly give a parameter regime (albeit a very narrow one) where this will not hold.
\begin{proposition}
\label{p:lcchord}
If $n(1-p)^{8/5} \to 0$ or $n\sqrt{p} \to 0$ then
$$ \lim_{n \to \infty} \pr{\mbox{$I(n,p)$ has local linear resolution}} =  \lim_{n \to \infty} \pr{\mbox{$I(n,p)$ has local linear presentation}} = 1.$$
\end{proposition}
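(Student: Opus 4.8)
The plan is to reduce the two claimed statements to analogous statements about the complement graph $G(n,1-p)$ being locally chordal and locally $4$-chordal, and then to use first-moment estimates on small bad configurations. By Corollary \ref{c:loclinrescondn}, $I(n,p)$ has local linear resolution (resp.\ presentation) if and only if $G(n,p)$ is locally cochordal (resp.\ locally $4$-cochordal), i.e.\ if and only if $G(n,1-p)$ is locally chordal (resp.\ locally $4$-chordal), using $G(n,p)^c \stackrel{d}{=} G(n,1-p)$. Since local chordality implies local $4$-chordality (and since we want both limits to be $1$, it suffices to prove the stronger statement, local chordality, w.h.p.), I would aim to show $\pr{\mbox{$G(n,1-p)$ is not locally chordal}} \to 0$ under either hypothesis.

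First I would unpack "locally chordal'': $G(n,1-p)$ fails to be locally chordal iff there is a vertex $v$ and an induced chordless cycle $C$ of length $\geq 4$ in the subgraph of $G(n,1-p)$ obtained by deleting $v$ and all of its $G(n,1-p)$-neighbours. The key combinatorial observation is that such a witness consists of a cycle $x_1,\ldots,x_k$ (chordless in $G(n,1-p)$) together with a vertex $v$ that is \emph{non-adjacent in $G(n,1-p)$} to every $x_i$ — equivalently, in the original graph $G(n,p)$, $v$ is adjacent to all of $x_1,\ldots,x_k$ while $x_1,\ldots,x_k$ is a chordless cycle in $G(n,p)^c$. So the relevant "forbidden subgraph'' on $k+1$ labelled vertices has, in $G(n,1-p)$, exactly the $k$ cycle edges present, the $\binom{k}{2}-k$ cycle chords absent, and all $k$ edges from $v$ to the cycle absent; the remaining $v$-cycle structure is irrelevant. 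I would then take a union bound over $k \geq 4$ and over the choice of the $k+1$ vertices: the expected number of such witnesses of length $k$ is at most
$$ \frac{(k-1)!}{2}\,\binom{n}{k}(n-k)\,(1-p)^{k}\,p^{\binom{k}{2}-k}\,(1-p)^{k} \;\leq\; C\,n^{k+1}(1-p)^{2k}p^{\binom{k}{2}-k}, $$
where the extra factor $(1-p)^k$ over the chordless-cycle count in \eqref{e:expck} comes from the $k$ absent edges joining $v$ to the cycle (here I write $(1-p)$ for the probability that a given pair is a non-edge of $G(n,p)$, i.e.\ an edge of $G(n,1-p)$, consistent with the paper's convention of applying \eqref{e:expck} to $G(n,1-p)$). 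For $k=4$ this bound is $\asymp n^5(1-p)^8 p^2 = (n(1-p)^{8/5})^5 p^2$, which $\to 0$ precisely when $n(1-p)^{8/5}\to 0$; separately it is $\leq C (n(1-p))^4 p^2 \cdot n\sqrt p \cdot (1-p)^4 \sqrt p$, handled when $n\sqrt p \to 0$. The remaining step is to sum the tail over $k \geq 5$, which is a geometric-type series exactly as in the derivation of \eqref{e:sumeck}: under either hypothesis one checks that the per-term ratio is eventually $<1$ (using $p \leq 1$ and, in the first case, $n(1-p)^{8/5}\to 0$ forcing $n(1-p)\to 0$ as well, or in the second case $p\to 0$ together with $np^{1/2}\to 0$), so the tail is dominated by the $k=4$ term up to a constant.

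The main obstacle I anticipate is bookkeeping the two regimes simultaneously in the tail sum: the factor $n^{k+1}(1-p)^{2k}p^{\binom{k}{2}-k}$ behaves differently depending on whether $(1-p)$ or $p$ is the small quantity, so I would split into the cases $n(1-p)^{8/5}\to 0$ and $n\sqrt p \to 0$ and verify convergence of the series separately in each, much as the proof of Theorem \ref{t:lrlp} splits \eqref{e:Gnpchordal} into the subcases $\lambda = 0$ and $\lambda > 0$. A minor subtlety is that when $n\sqrt p \to 0$ the original graph is essentially a matching (by Theorem \ref{t:denrg}), so local chordality is almost automatic; one could alternatively dispatch that case by noting $G(n,p)$ has no vertex of degree $\geq 2$ w.h.p., whence deleting any vertex and its neighbours leaves a graph on $n - O(1)$ vertices that is again essentially an independent set together with a matching, hence trivially chordal — but the first-moment argument above covers it uniformly and is cleaner to write. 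No second-moment argument is needed here since we only want the probability to tend to $1$.
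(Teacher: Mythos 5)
Your overall strategy --- reduce via Corollary \ref{c:loclinrescondn} to a statement about the complement graph, count bad configurations consisting of a chordless cycle in $G(n,1-p)$ together with a witness vertex, and close with a geometric tail sum --- is the same as the paper's, and your expected-count bound $\asymp n^{k+1}(1-p)^{2k}p^{\binom{k}{2}-k}$ agrees with the paper's $\EXP{C_k^*(n,1-p)}$ when $n(1-p)^k$ is small. But the write-up has three concrete problems. First, the forbidden configuration is described backwards: local cochordality of $G(n,p)$ asks that the complement of $G(n,p)\setminus Nb_{G(n,p)}[v]$ be chordal, and the set of surviving vertices is exactly the set of $G(n,1-p)$-\emph{neighbours} of $v$; so the witness is a chordless cycle in $G(n,1-p)$ all of whose vertices are \emph{adjacent} to $v$ in $G(n,1-p)$ (equivalently, non-adjacent to $v$ in $G(n,p)$), not the opposite as you state, and the relevant operation is restricting $G(n,1-p)$ to $Nb_{G(n,1-p)}(v)$, not ``deleting $v$ and its $G(n,1-p)$-neighbours.'' Your probability factor $(1-p)^k$ happens to be the one for the correct configuration, so the final formula survives this sign flip, but the combinatorial translation as written is wrong.

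Second, two of your justifications fail. The claim that $n(1-p)^{8/5}\to 0$ forces $n(1-p)\to 0$ is false (take $1-p=n^{-0.7}$: then $n(1-p)^{8/5}=n^{-0.12}\to 0$ while $n(1-p)=n^{0.3}\to\infty$; Remark \ref{r:lcchordcchord} lives precisely in this regime). The tail ratio should instead be controlled by $n(1-p)^2p^{k-1}\le n(1-p)^2\le n(1-p)^{8/5}\to 0$, which is how the paper's estimate closes. More seriously, your uniform first-moment bound does not cover the hypothesis $n\sqrt p\to 0$: for $p=n^{-2.1}$ one has $n\sqrt p\to 0$, yet your $k=4$ bound $n^5(1-p)^8p^2\sim n^{0.8}\to\infty$ (and your displayed factorization is arithmetically off --- its right-hand side equals $Cn^5(1-p)^8p^3$, which is \emph{smaller} than the left-hand side, so the inequality points the wrong way). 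The overcount comes from replacing the probability $1-(1-(1-p)^4)^{n-4}$ that some witness vertex exists by the union bound $n(1-p)^4$, which exceeds $1$ in this regime. You must either cap that factor at $1$, or, as the paper does, note $C_k^*(n,1-p)\le C_k'(n,1-p)$ and invoke \eqref{e:Gnp4chordal} and \eqref{e:Gnpchordal} when $n\sqrt p\to 0$ (there $(n(1-p))^4p^2\to 0$, so $G(n,p)$ is already cochordal w.h.p., which is stronger than local cochordality).
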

\begin{remark}
\label{r:lcchordcchord}
We shall first discuss the above proposition in relation to linear resolution. From Theorem \ref{t:lrlp}, we have that if $n(1-p) \to \infty$ and $n(1-p)^{8/5} \to 0$ then 
$$ \pr{\mbox{$I(n,p)$ does not have linear presentation but local linear resolution}} \to 1.$$
Also trivially from Theorem \ref{t:lrlp}, we have that the above probability converges to $0$ if $n(1-p)\sqrt{p} \to 0$ and so this leaves open only the case $n^5(1-p)^8p^2 \to \infty$. In this case, we conjecture that the probability of $I(n,p)$ having local linear resolution or presentation converges to $0$. Possibly, this can be proven via second-moment method as for linear resolution and presentation but we do not pursue this in the article. 
\end{remark}

\begin{proof}[Proof of Proposition \ref{p:lcchord}] 
Due to Corollary \ref{c:loclinrescondn}, we need to only prove the statements in the Proposition for $G(n,p)$ being locally cochordal and locally $4$-cochordal respectively. This is similar to the proof of Theorem \ref{t:lrlp}. \\

We define $C_k^*(n,1-p)$ denote the number of $k$-cycles without a chord in $G(n,1-p)$ such that there is a vertex in the graph that is not connected in $G(n,p)$ to any of the vertices in the $k$-cycle. Again observe that
\begin{align*}
\{\mbox{$G(n,p)$ is locally $4$-cochordal}\} &=  \{C_4^*(n,1-p) = 0 \} , \\ 
\{\mbox{$G(n,p)$ is locally cochordal}\} &=  \cap_{k=4}^{\infty}\{C_k^*(n,1-p) = 0 \}.
\end{align*}
Now we proceed as in the proof of Theorem \ref{t:lrlp} by computing expectations of $C_k^*$. 
$$ \EXP{C_k^*(n,1-p)} = \frac{(k-1)!}{2}\binom{n}{k}(1-p)^kp^{\binom{k}{2}-k}(1 - (1 - (1-p)^k)^{n-k}).$$
We justify the additional term as follows. Consider the event that every other vertex is connected to at least one of the $k$ vertices in the cycle. The probability a given vertex is connected to at least one of the $k$ vertices in the cycle is easily seen to be $1 - (1-p)^k$ and since these events are indepdendent for different vertices, we derive that the probability every other vertex is connected to at least one of the $k$ vertices in the cycle is $(1 - (1-p)^k)^{n-k}$.  So the complementary probability of at least one vertex not being connected in $G(n,p)$ to any of the vertices in the $k$-cycle is $(1 - (1 - (1-p)^k)^{n-k})$. \\

Under our assumption that $n(1-p)^{8/5} \to 0$, we have that $n(1-p)^4 \to 0$ as well and so we can derive that 
\begin{align*}
\EXP{C_4^*(n,1-p)} & = 3\binom{n}{4}(1-p)^4p^2(1 - (1 - (1-p)^4)^{n-4}) \\
& = 3\binom{n}{4}(1-p)^4p^2(\sum_{j=1}^{n-4}\binom{n-4}{j}(-1)^{j-1}(1-p)^{4j}) \\
& \sim 3\binom{n}{4}(1-p)^4p^2n(1-p)^4 \sim \frac{3}{4!} n^5(1-p)^8p^2.
\end{align*}
Thus, we have that trivially $\EXP{C_4^*(n,1-p)} \to 0$ if $n(1-p)^{8/5} \to 0$. Now we use the arguments as in \eqref{e:sumeck} to show that $\sum_{k \geq 5} \EXP{C_k^*(n,1-p)} \to 0$ as well in this case. Below we will introduce an arbitrary constant $C$ whose value could change from line to line. 
\begin{align*}
\sum_{k \geq 5} \EXP{C^*_k(n,1-p)} & \leq \sum_{k \geq 5} \frac{(k-1)!}{2}\binom{n}{k}(1-p)^kp^{\binom{k}{2}-k}(1 - (1 - (1-p)^k)^{n-k}) \no \\
& \leq C  \sum_{k \geq 5} \frac{(k-1)!}{2}\binom{n}{k}(1-p)^kp^{\binom{k}{2}-k} n(1-p)^k \\
& \leq C n^5(1-p)^8p^2 \sum_{k \geq 1} \frac{(n(1-p)^2)^k p^{\binom{k+4}{2}-k-6}}{2(k+4)}\no \\
&  \leq C n^5 (1-p)^8p^2 \sum_{k \geq 1} \frac{(n(1-p)^2p^{(k+7)/2})^k}{2(k+4)}.
\end{align*}
Since $n^5 (1-p)^8 \to 0$,  the sum is finite and thus we derive that as $n \to \infty$, 
$$\sum_{k \geq 5} \EXP{C^*_k(n,1-p)}  \to 0.$$
Since $C_k^*(n,1-p) \leq C'_k( n,1-p)$, when $n\sqrt{p} \to 0$ the result follows directly from \eqref{e:Gnp4chordal} and \eqref{e:Gnpchordal}.
\end{proof}
\subsection{Proof of Theorem \ref{t:reginp}}
\label{sec:proofreg}
We first make some observations about regularity and projective dimension leading to the proofs of Lipschitz property and additivity of regularity and projective dimension. \\
 
For any ideal $I$ in a polynomial ring $S$, we have $\reg(S/I)=\reg(I)-1$. This holds as $I(G)$ by definition is the kernel of the zeroth differential of the minimal free resolution of $S/I$. Further if $G_1$ and $G_2$ are two connected components of $G$ then $(S/I(G_1)) \otimes_K (S/I(G_2)) = S/(I(G_1 \cup G_2))  = S / I(G)$. From the above observations and additivity of regularity under the tensor product $\otimes_K$ (see \cite[Lemma 2.5(ii)]{ha2016depth}), we derive that if $G_1$ and $G_2$ are two connected components of $G$ then 
\begin{equation}
\label{e:addreg}
\reg(I(G))-1 = \reg(I(G_1))-1 + \reg(I(G_2))-1.
\end{equation} 
We shall need one more observation before proving Lipschitz property of regularity. We note that if $I(G)$ is an edge ideal in $K[x_1,\ldots, x_n]$, $y$ is a variable and $J$ is the extension of $I(G)$ in $K[x_1, \ldots, x_n ,y]$ then $\reg(I(G))=\reg(J)$. Also with the same notation $\reg(J)= \reg(J +(y))$. The two assertions follow from the fact that adding redundant variables in the underlying polynomial ring does not change the minimal free resolution of a module. 
\begin{lemma}
\label{t:lipreg}
Let $G$ be a simple graph and $v \in V(G)$. Then  $|\reg(I(G)) - \reg(I(G \setminus v))| \leq 1$. 
\end{lemma}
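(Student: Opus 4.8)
The plan is to prove the two inequalities $\reg(I(G \setminus v)) \leq \reg(I(G))$ and $\reg(I(G)) \leq \reg(I(G\setminus v)) + 1$ separately, using the standard short exact sequence obtained by splitting off the vertex $v$. Recall the colon-ideal identity noted just before Theorem \ref{thm:Froberg}: $I(G):x_v = I(G\setminus Nb[v]) + (x_u : u \in Nb(v))$, and that $(I(G), x_v) = I(G\setminus v) + (x_v)$ up to adding the redundant variable $x_v$, so by the observations preceding the lemma $\reg(I(G),x_v) = \reg(I(G\setminus v))$. The engine is the exact sequence
\begin{equation}
0 \longrightarrow \frac{S}{I(G):x_v}(-1) \stackrel{\cdot x_v}{\longrightarrow} \frac{S}{I(G)} \longrightarrow \frac{S}{(I(G),x_v)} \longrightarrow 0,
\no
\end{equation}
together with the elementary fact that for a short exact sequence $0 \to A \to B \to C \to 0$ one has $\reg(B) \leq \max\{\reg(A), \reg(C)\}$, $\reg(A) \leq \max\{\reg(B), \reg(C)+1\}$, and $\reg(C) \leq \max\{\reg(B), \reg(A)-1\}$.

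First I would record the regularity of the two outer terms in terms of graph data. For $S/(I(G),x_v)$: modulo $x_v$, this is the edge ideal of $G \setminus v$ (isolated vertices and the redundant variable $x_v$ do not affect regularity), so $\reg(S/(I(G),x_v)) = \reg(I(G\setminus v)) - 1$. For $S/(I(G):x_v)(-1)$: here $I(G):x_v$ is generated by the linear forms $x_u$, $u \in Nb(v)$, together with the edge ideal $I(G \setminus Nb[v])$; since the linear variables are on disjoint variable sets from $I(G\setminus Nb[v])$, we get $\reg(S/(I(G):x_v)) = \reg(I(G\setminus Nb[v])) - 1$, and the shift by $1$ makes it $\reg(I(G\setminus Nb[v]))$. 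The key point is then that $G \setminus Nb[v]$ is an induced subgraph of $G \setminus v$, and by the additivity \eqref{e:addreg} over connected components one obtains the monotonicity $\reg(I(H')) \leq \reg(I(H))$ whenever $H'$ is obtained from $H$ by deleting vertices — more precisely, deleting a single vertex can only drop the regularity of an edge ideal or keep it equal (this follows from the same short exact sequence applied inductively, or from known results such as \cite{ha2016depth}; I would state it as an auxiliary claim and prove it by the SES above). Granting that, $\reg(I(G\setminus Nb[v])) \leq \reg(I(G\setminus v))$.

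With these two identities/bounds in hand the lemma is immediate. From $\reg(B) \leq \max\{\reg(A),\reg(C)\}$ applied to the SES: $\reg(I(G)) - 1 = \reg(S/I(G)) \leq \max\{\reg(I(G\setminus Nb[v])), \reg(I(G\setminus v)) - 1\} \leq \reg(I(G\setminus v))$, giving $\reg(I(G)) \leq \reg(I(G\setminus v)) + 1$. For the reverse direction, from $\reg(C) \leq \max\{\reg(B), \reg(A) - 1\}$: $\reg(I(G\setminus v)) - 1 = \reg(S/(I(G),x_v)) \leq \max\{\reg(S/I(G)), \reg(S/(I(G):x_v)(-1)) - 1\} = \max\{\reg(I(G)) - 1, \reg(I(G\setminus Nb[v])) - 1\} \leq \reg(I(G)) - 1$, where the last step again uses monotonicity under vertex deletion since $G \setminus Nb[v]$ is an induced subgraph of $G$. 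Hence $\reg(I(G\setminus v)) \leq \reg(I(G))$, and combining the two bounds proves $|\reg(I(G)) - \reg(I(G\setminus v))| \leq 1$.

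The main obstacle is the monotonicity statement $\reg(I(H')) \leq \reg(I(H))$ for induced subgraphs: it is the one nontrivial input and must either be cited cleanly or proved by an induction on the number of deleted vertices, peeling off one vertex at a time via exactly the same short exact sequence. Everything else is bookkeeping with the behaviour of regularity under adding redundant variables, adding variables to an ideal generated in disjoint variable sets, and the tensor-product additivity already quoted in \eqref{e:addreg}. I would isolate the monotonicity as a short preliminary claim so that the proof of Lemma \ref{t:lipreg} itself is a two-line application of the short exact sequence.
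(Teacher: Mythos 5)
Your argument is correct in substance and arrives at both inequalities, but it is organized differently from the paper's proof. The paper works directly with the decomposition $I(G)=J+xH$ (where $J=I(G\setminus v)$ and $H$ is the ideal of neighbour variables) and quotes two facts: that $\reg(I(G),x)\leq \reg(I(G))$ and that $\reg(J+xH)\leq \reg(J)+1$; it never writes down the short exact sequence or the colon ideal. You instead run everything through the sequence $0\to (S/(I(G):x_v))(-1)\to S/I(G)\to S/(I(G),x_v)\to 0$, identify both outer terms graph-theoretically, and apply the standard regularity inequalities for short exact sequences. This buys a more transparent and self-contained-looking derivation (and your bookkeeping with the shift, the linear forms in $I(G):x_v$, and the identification $\reg(S/(I(G):x_v))=\reg(I(G\setminus Nb[v]))-1$ is all correct), at the cost of needing the monotonicity $\reg(I(H'))\leq\reg(I(H))$ for induced subgraphs, which is exactly the nontrivial input the paper also uses (in the equivalent form $\reg(I,x)\leq\reg(I)$, asserted as known).

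One concrete warning about that input: your fallback suggestion to prove the monotonicity ``by induction, peeling off one vertex at a time via exactly the same short exact sequence'' does not close. To bound $\reg(I(G\setminus v))$ by $\reg(I(G))$ via the sequence you must bound $\reg(I(G\setminus Nb[v]))$ by $\reg(I(G))$; but $G\setminus Nb[v]$ is reached from $G$ only by first deleting a vertex of $G$ itself, which is an instance of the statement being proved and is not covered by an induction hypothesis on graphs with fewer vertices (the hypothesis only compares $G\setminus Nb[v]$ with $G\setminus v$, returning you to where you started). So the monotonicity must come from elsewhere: cite the restriction lemma for squarefree monomial ideals (via Hochster's formula, the multigraded Betti numbers of $I(G\setminus v)$ are a subset of those of $I(G)$, so both $\reg$ and $\text{pd}$ can only drop under vertex deletion), which is available in \cite{Miller2004} or \cite{ha2016depth}. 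With that citation in place of the inductive claim, your proof is complete.
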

\begin{proof}
Let us denote by $x$ the variable corresponding to vertex $v$. As a convention, we denote by $(I(G),x)$ the ideal $I(G \setminus v)+(x)$ and thus from the observations above, we obtain that $\reg(I(G),x)=\reg(I((G \setminus v))+ (x)) = \reg(I(G \setminus v))$ as $G \setminus v$ has no $x$ variable. We already know that $\reg(I(G),x) \leq \reg(I(G))$ . Now it is enough to show that $\reg(I(G),x)$ has a lower bound $\reg(I(G))-1$. Now writing $I(G)=J+xH$, where $H$ is an ideal of variables and generators of $J$ do not involve $x$, we have that $\reg(I(G)) \leq \reg(J) +1$. As $\reg(J)=\reg(J,x)= \reg(I(G),x)$, we have $\reg(I(G)) \leq \reg(I(G),x) +1$ as required.
\end{proof}

\begin{lemma}
\label{t:pdlip}
Let $G$ be a simple graph and $v \in V(G)$. Then  $|\text{pd}(I(G)) - \text{pd}(I(G \setminus v))| \leq \Delta(G)+1$ where $\Delta(G)$ is the maximum vertex degree. 
\end{lemma}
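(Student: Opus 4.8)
The plan is to mimic the structure of the proof of Lemma \ref{t:lipreg}, but tracking the fact that projective dimension is not additive in as clean a way, and that deleting a vertex can remove many monomial generators at once (one for each neighbor). Write $x$ for the variable of $v$, and set $J$ for the ideal generated by the edges of $G$ not touching $v$, so that $I(G) = J + xH$, where $H = (y_u : u \in \text{Nb}(v))$ is the ideal generated by the neighbors of $v$. Note $J = I(G\setminus v)$ up to adjoining the redundant variable $x$, so $\text{pd}(I(G\setminus v)) = \text{pd}(J)$; also $(I(G),x) = J + (x)$ and, since $x$ is a nonzerodivisor modulo $J$, one has $\text{pd}(J+(x)) = \text{pd}(J) + 1$ (Koszul/mapping-cone on the regular element $x$), or one can keep things at the level of $S/I$ and use $\text{pd}(S/(I+(x))) = \text{pd}(S/I)+1$.

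The heart is the short exact sequence relating $I(G)$ to the colon ideal. Using $I(G):x = I(G\setminus \text{Nb}[v]) + H$ (stated in Section \ref{s:EI}) together with the exact sequence
\begin{equation*}
0 \longrightarrow S/(I(G):x)\,(-1) \overset{\cdot x}{\longrightarrow} S/I(G) \longrightarrow S/(I(G),x) \longrightarrow 0,
\end{equation*}
one gets $\text{pd}(S/I(G)) \le \max\{\text{pd}(S/(I(G):x)) + 1,\ \text{pd}(S/(I(G),x))\}$ and a matching lower-bound-type inequality, and similarly $\text{pd}(S/(I(G),x))$ and $\text{pd}(S/(I(G):x))$ are each controlled by the other two up to the usual $\pm 1$ from the shift. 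So it remains to bound $\text{pd}(S/(I(G):x))$ against $\text{pd}(S/(I(G),x)) = \text{pd}(S/I(G\setminus v)) + 1$. Here $I(G):x = I(G\setminus \text{Nb}[v]) + H$ is the edge ideal of $G\setminus \text{Nb}[v]$ together with $\deg(v) = |\text{Nb}(v)|$ extra linear generators; adjoining one linear (regular) generator at a time raises projective dimension by at most $1$ each, and $G\setminus \text{Nb}[v]$ is an induced subgraph of $G\setminus v$, so deleting the $\le \Delta(G)$ vertices of $\text{Nb}(v)$ one by one and applying the $\pm 1$ bound for single-vertex deletion in the $S/I$ normalization changes $\text{pd}$ by at most $\Delta(G)$. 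Combining, $\text{pd}(S/(I(G):x))$ differs from $\text{pd}(S/I(G\setminus v))$ by at most $\Delta(G)$-ish, and then the exact sequence pushes $\text{pd}(S/I(G))$ within $\Delta(G)+1$ of $\text{pd}(S/I(G\setminus v))$; translating back via $\text{pd}(I) = \text{pd}(S/I) - 1$ gives the claim.

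I expect the main obstacle to be bookkeeping the constant carefully: each of the three maneuvers (the colon/quotient exact sequence, stripping off the $|\text{Nb}(v)|$ linear generators of $H$, and passing between $G\setminus\text{Nb}[v]$ and $G\setminus v$) contributes a $+1$ or absorbs into a $\max$, and one must check these do not accumulate beyond $\Delta(G)+1$. An alternative, possibly cleaner, route is to avoid the colon ideal entirely and induct directly: delete $v$ one neighbor-relation at a time is not available, but one can instead compare $I(G)$ and $I(G\setminus v) = J$ through the single short exact sequence $0 \to (J:H)/\,\cdots$; however I think the colon-ideal approach above is the most transparent, since $I(G):x$ has an explicit description and single-vertex deletions are already under control via the induced-subgraph structure, so the Lipschitz constant $\Delta(G)+1$ emerges naturally as "one step for the colon sequence plus at most $\Delta(G)$ steps for the neighbors."
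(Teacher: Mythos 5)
Your route through the colon-ideal exact sequence $0 \to S/(I(G):x)(-1) \to S/I(G) \to S/(I(G),x) \to 0$ is different from the paper's, which instead writes $I(G)=I(G\setminus v)+(xy_1,\dots,xy_t)$ and uses subadditivity of projective dimension under sums of ideals (tensoring resolutions) for the upper bound, citing results of Caviglia et al.\ together with Auslander--Buchsbaum for the reverse inequality. But your argument has a genuine gap at the step where you compare $\text{pd}(S/I(G\setminus \text{Nb}[v]))$ with $\text{pd}(S/I(G\setminus v))$ by ``deleting the $\le\Delta(G)$ vertices of $\text{Nb}(v)$ one by one and applying the $\pm 1$ bound for single-vertex deletion.'' No such $\pm 1$ bound is available, and invoking one is circular: it is a strictly stronger form of the very lemma you are proving (if $\text{pd}$ changed by at most $1$ per vertex deletion, the lemma would hold with constant $1$). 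It is moreover false: for the star $K_{1,t}$ with center $v$ one has $\text{pd}(S/I(K_{1,t}))=t$ while $\text{pd}(S/I(K_{1,t}\setminus v))=0$, so a single deletion can change $\text{pd}(S/I)$ by $\Delta(G)$. If each of your $\le\Delta(G)$ intermediate deletions is only controlled by the lemma itself, the accumulated error is of order $\Delta(G)^2$, not $\Delta(G)$.

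The step can be repaired without circularity. For edge ideals (indeed for any monomial ideal restricted to a subset of the variables, by Hochster's formula) graded Betti numbers do not increase when passing to an induced subgraph, so $\text{pd}(S/I(H))\le\text{pd}(S/I(G))$ whenever $H$ is induced in $G$. This gives $\text{pd}(S/I(G\setminus\text{Nb}[v]))\le\text{pd}(S/I(G\setminus v))$ in one stroke, hence $\text{pd}(S/(I(G):x))\le \text{pd}(S/I(G\setminus v))+\deg(v)$, and your exact sequence then yields the upper bound $\text{pd}(S/I(G))\le \text{pd}(S/I(G\setminus v))+\Delta(G)$. The same monotonicity disposes of the lower bound directly via $\text{pd}(I(G\setminus v))\le\text{pd}(I(G))$; your ``matching lower-bound-type inequality'' from the exact sequence alone is not obviously sufficient, since it bounds $\text{pd}(S/(I(G),x))$ by $\max\{\text{pd}(S/(I(G):x))+1,\ \text{pd}(S/I(G))\}$ and the first entry of the maximum can dominate.
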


\begin{proof}
For ideals $I$ and $J$ in $S$ one has $\text{pd} (\frac{S}{I+J}) \leq \text{pd }(S/I) + \text{pd }(S/J)$. This follows from the fact that tensor product of the free resolutions give free resolution of tensor product. Now $I(G)= I(G\setminus \{x\})+ (xy_1, \ldots, xy_t)$ where the neighbours of $\{x\}$ are $\{y_1, \ldots, y_t\}$.This gives $\text{pd }(S/I(G)\leq \text{pd }S/I(G\setminus \{x\})+ \text{pd }(S/(xy_1, \ldots, xy_t))$. But $\text{pd }(S/(xy_1, \ldots, xy_t)) \leq \Delta(G)+1$. The rest follows from Theorems 3.1(i) and 4.3(ii) of \cite{Caviglia2018} and the fact that $\text{pd } I(G) + \text{depth } I(G) =  |V(G)|$.
\end{proof}
\begin{lemma}
\label{l:add_pd}
Let $G$ be any graph with $I(G)$ its edge ideal in $K[x_1, \ldots , x_n]$ and let $C_1, \ldots C_t$ be the induced subgraph on the connected components of $G$. Further let $K[C_1], \ldots, K[C_t]$ be the polynomials rings over $K$ on the vertices of $C_1, \ldots C_t$ respectively.   Then we have 
$$\text{pd} (K[x_1, \ldots x_n]/I(G)) = \sum_i \text{pd}(K[C_i]/I(G))$$. 
\end{lemma}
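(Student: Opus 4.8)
The plan is to reduce the statement to the additivity of projective dimension under tensor products over the field $K$, exactly in the spirit of how \eqref{e:addreg} was obtained for regularity.

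First I would fix notation: write $S = K[x_1,\ldots,x_n]$ and, for each $i$, let $S_i := K[C_i]$ be the polynomial ring on the vertex set of the component $C_i$, so that $S = S_1 \otimes_K \cdots \otimes_K S_t$. Since the $C_i$ have pairwise disjoint vertex sets and every edge of $G$ lies in exactly one $C_i$, we have $I(G) = I(C_1)S + \cdots + I(C_t)S$, and hence a $K$-algebra isomorphism
$$ S/I(G) \;\cong\; \bigl(S_1/I(C_1)\bigr) \otimes_K \cdots \otimes_K \bigl(S_t/I(C_t)\bigr), $$
where $I(C_i) \subset S_i$ denotes the edge ideal of $C_i$ (if $C_i$ is a single isolated vertex then $I(C_i) = 0$ and the factor contributes $\text{pd} = 0$, consistently).

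Next, for each $i$ let $\mathbb{F}^{(i)}$ be the minimal graded free resolution of $S_i/I(C_i)$ over $S_i$, of length $p_i := \text{pd}_{S_i}(S_i/I(C_i))$. Because $K$ is a field, the tensor-product complex $\mathbb{F}^{(1)} \otimes_K \cdots \otimes_K \mathbb{F}^{(t)}$ is a complex of free $S$-modules that resolves the tensor-product module: no higher $\mathrm{Tor}^K$ obstructs exactness, this being the algebraic K\"unneth theorem. It is moreover \emph{minimal}, since its differential is the usual alternating sum of maps of the form $\mathrm{id}\otimes\cdots\otimes d^{(i)}\otimes\cdots\otimes\mathrm{id}$, and each $d^{(i)}$ has entries in the maximal homogeneous ideal of $S_i$, hence in that of $S$. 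Therefore it is the minimal free resolution of $S/I(G)$ over $S$, and its length is $p_1 + \cdots + p_t$, giving $\text{pd}_S(S/I(G)) = \sum_i p_i$. Finally, adding the variables of $S$ not belonging to $C_i$ does not change the minimal free resolution (as observed before Lemma~\ref{t:lipreg}), so $p_i = \text{pd}_{K[C_i]}(K[C_i]/I(C_i))$, which is the right-hand side.

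There is no genuine obstacle here; the only points deserving a word of care are the minimality of the tensor-product complex and the bookkeeping for isolated-vertex components. As an alternative one can avoid resolutions altogether and argue via the K\"unneth formula $\mathrm{Tor}^S_k(S/I(G),K) \cong \bigoplus_{k_1+\cdots+k_t=k}\bigotimes_i \mathrm{Tor}^{S_i}_{k_i}(S_i/I(C_i),K)$: the largest index $k$ carrying a nonzero summand is $k=\sum_i p_i$, since a tensor product over a field of nonzero vector spaces is nonzero. (The inequality $\text{pd}(S/(I+J)) \le \text{pd}(S/I)+\text{pd}(S/J)$ used in Lemma~\ref{t:pdlip} already yields the ``$\le$'' half by induction; the K\"unneth argument supplies the matching lower bound.)
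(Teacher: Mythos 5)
Your proof is correct, but it takes a genuinely different route from the paper's. The paper deduces the identity in two lines from two cited facts: additivity of depth under tensor products over $K$ of graded modules on disjoint variable sets, combined with the Auslander--Buchsbaum relation $\text{pd}(K[x_1,\ldots,x_n]/I(G)) = |V(G)| - \text{depth}(K[x_1,\ldots,x_n]/I(G))$ (together with the trivial additivity of $|V(C_i)|$ over components), exactly parallel to how \eqref{e:addreg} is obtained for regularity. You instead prove additivity of projective dimension directly: you tensor the minimal free resolutions of the $S_i/I(C_i)$ over $K$, observe that the resulting complex of free $S$-modules is exact away from homological degree zero because $K$ is a field (K\"unneth), check minimality by inspecting the differential, and then read off the length from the nonvanishing of the top term $F^{(1)}_{p_1}\otimes_K\cdots\otimes_K F^{(t)}_{p_t}$; this pins down both inequalities at once, so you never need the subadditivity $\text{pd}(S/(I+J))\le\text{pd}(S/I)+\text{pd}(S/J)$ invoked in Lemma~\ref{t:pdlip}. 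Both arguments are standard and valid. Yours is more self-contained, since it verifies the K\"unneth and minimality claims rather than citing a depth-additivity fact, and its Tor formulation meshes directly with the description $\text{pd}\,M=\max\{i:\mathrm{Tor}_i(M,K)\neq 0\}$ recalled in the preliminaries; the paper's version is shorter and reuses the same $\otimes_K$ mechanism already set up for regularity. You also correctly read the right-hand side of the statement as $\text{pd}(K[C_i]/I(C_i))$ and handle isolated-vertex components explicitly, which the paper leaves implicit.
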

\begin{proof}
This follows from the fact that for polynomial rings over$k$ in disjoint set of variables $A$, $B$, graded $A$-module $M$ and graded $B$-module $N$ one has $\text{depth} (M \otimes_K N)= \text{depth}(M) + \text{depth}(N)$. And the fact that $\text{pd} (K[x_1, \ldots , x_n]/I(G)) = |V(G)|- \text{depth}(K[x_1, \ldots , x_n]/I(G))$.
\end{proof}

%
\begin{proof}[Proof of Theorem \ref{t:reginp}]
\noindent {\sc (1) :}  From \eqref{e:conc_lipfunctionals} and Lipschitz property of regularity (Lemma \ref{t:lipreg}), we obtain that
\begin{equation}
\label{e:sllnreg}
\limsup_{n \to \infty}  n^{-1}\VAR{\reg(I(n,p))} < \infty \, \, \mbox{and} \, \,\lim_{n \to \infty} \frac{\reg(I(n,p)) - \EXP{\reg(I(n,p))}}{n} = 0 \, \, \, \, \mbox{a.s..}
\end{equation}
This proves the first statement of regularity.  The claim for projective dimension follows from Lemmas \ref{l:conc_nearlip} and \ref{t:pdlip}.  From this the claim for depth also follows as $\text{depth } I(n,p) = n - \text{pd } I(n,p).$  \\ \\
\noindent {\sc (2) :} Due to the above a.s. convergence,  we can easily obtain strong laws from expectation asymptotics. for the corresponding quantities.  So,  in the below proofs,  shall focus only on the latter for $\lambda \leq 1$.  \\

Set $\reg^*(I(n,p)) := \reg(I(n,p)) - 1$. Observe that it suffices to prove the theorem for $\reg^*(I(n,p))$.  Since $\reg^*(I(n,p))$ is additive, we have that
$$ \reg^*(I(n,p)) = \sum_{i=1}^K \reg^*(I(G_i)),$$
where $K$ is the number of components of $G(n,p)$ and $G_i$ denotes the $i$th component. Suppose for $v \in [n]$, we denote the component of $v$ by $C_v(n,p)$ and $I_v(n,p)$ as the edge ideal of $C_v(n,p)$, then we can re-write the above as
$$ \reg^*(I(n,p)) = \sum_{v \in [n]} \frac{\reg^*(I_v(n,p))}{|C_v(n,p)|}.$$
Since $\frac{\reg^*(I_v(n,p))}{|C_v(n,p)|}, v \in [n]$ are identically distributed, we have that
$$\EXP{\reg^*(I(n,p))} = n \EXP{\frac{\reg^*(I_1(n,p))}{|C_1(n,p)|}},$$
and so
$$ n^{-1} \EXP{\reg^*(I(n,p))} = \EXP{\frac{\reg^*(I_1(n,p))}{|C_1(n,p)|}}.$$
Now we evaluate the RHS in the above equation by using local weak convergence of the \ER \, random graph.   Now, we assume that $\lambda \leq 1$ and study the limit of $\EXP{\frac{\reg^*(I_1(n,p))}{|C_1(n,p)|}}$.  Using that $\reg^*(I_1(n,p)) \leq |C_1(n,p)|$,  we can derive that for any $t > 0$,
\begin{align*}
\EXP{\frac{\reg^*(I_1(n,p))}{|C_1(n,p)|}} &= \EXP{\frac{\reg^*(I_1(n,p))}{|C_1(n,p)|}\1[|C_1(n,p)| \leq t]} + \EXP{\frac{\reg^*(I_1(n,p))}{|C_1(n,p)|}\1[|C_1(n,p)| > t]} \\
& \leq  \EXP{\frac{\reg^*(I_1(n,p))}{|C_1(n,p)|}\1[|C_1(n,p)| \leq t]} + \pr{|C_1(n,p)| > t}.
\end{align*}
So again using \cite[Theorem 3.12]{Bordenave2016}, we can derive
\begin{align*}
\limsup_{n \to \infty} \EXP{\frac{\reg^*(I_1(n,p))}{|C_1(n,p)|}} &\leq \limsup_{n \to \infty}\EXP{\frac{\reg^*(I_1(n,p))}{|C_1(n,p)|}\1[|C_1(n,p)| \leq t]} + \limsup_{n \to \infty}\pr{|C_1(n,p)| > t} \\
&= \EXP{\frac{\reg^*(GW(\lambda))}{|GW(\lambda)|}\1[|GW(\lambda)| \leq t]} + \pr{|GW(\lambda)| > t}.
\end{align*}
Now letting $t  \to \infty$ and using the classical fact that $|GW(\lambda)|$ is a.s. finite for $\lambda \leq 1$ \cite[Theorem 4.1]{Bordenave2016},  we have that
$$\limsup_{n \to \infty} \EXP{\frac{\reg^*(I_1(n,p))}{|C_1(n,p)|}} \leq \EXP{\frac{\reg^*(GW(\lambda))}{|GW(\lambda)|}}.$$
The lower bound can be obtained easily by the above arguments and noting that
$$ \EXP{\frac{\reg^*(I_1(n,p))}{|C_1(n,p)|}} \geq \EXP{\frac{\reg^*(I_1(n,p))}{|C_1(n,p)|}\1[|C_1(n,p)| \leq t]}.$$
Thus, we obtain that 
$$\lim_{n \to \infty} \EXP{\frac{\reg^*(I_1(n,p))}{|C_1(n,p)|}} = \EXP{\frac{\reg^*(GW(\lambda))}{|GW(\lambda)|}}.$$
Now using \eqref{e:reglimit} and the fact $\reg^*(GW(\lambda)) = \nu(GW(\lambda))$ as $GW(\lambda)$ is a tree ( \cite[Theorem 1.1]{Beyarslan2015}),  the asymptotics for regularity is complete.  \\

The statement for projective dimension $\text{pd }I(n,p)$ can be derived exactly as above using Lemma \ref{l:add_pd} and with only the simplfication in case of trees missing for projective dimension.  Now,  the claim for depth also follows as $\text{depth } I(n,p) = n - \text{pd } I(n,p).$
\end{proof}
\subsection{Proof of Theorem \ref{t:unmixed_Inp}:}
\label{sec:proofunmixed}

\begin{proof}[Proof of Theorem \ref{t:unmixed_Inp}]

By Theorem \ref{t:unmixed}, we need to show that w.h.p. there are two minimal vertex covers of different sizes for not being unmixed or show that w.h.p. any minimal vertex cover is of same size to prove unmixedness. Since complements of vertex covers are independent sets which are equivalent to cliques in the complement graph, it suffices to also show that w.h.p. there are maximal cliques of different sizes in $G(n,1-p)$ and conclude that $I(n,p)$ is not unmixed. 

\begin{enumerate}
\item By Theorem \ref{t:denrg}, the components of $G(n,p)$ are either singleton edges or  singleton vertices w.h.p.. Thus any minimal vertex cover consists of one vertex from each edge component and hence the size of a minimal vertex cover is always the same. 

\item If $\alpha >1$, by Theorem \ref{t:denrg}, $G(n,p)$ will have a path component on $3$ vertices w.h.p.. Every path component has two minimal vertex covers, one of size $1$ and the other of size $2$. If $\alpha = 1$, we know from \cite[Corollary 3.24]{Bordenave2016} that $G(n,p)$ will have any finite tree as a component w.h.p.. Hence, again the minimal vertex cover is not unique.

\item When $p \in (0,1)$, there are maximal cliques of different orders in $G(n,1-p)$ w.h.p.; see \cite[p. 424]{Bollobas1976cliques}. 

\item Let $1 - p = n^{-\alpha}$ and $\alpha^{-1} \notin \{2,3,\ldots,\infty\}$. We will argue differently in the cases $\alpha <1$ and $\alpha > 1$. \\

First, let $\alpha < 1$. Then there exists a $k \geq 2$ such that $\frac{1}{k} < \alpha < \frac{1}{k-1}$. Hence, we know that there exist maximal $k$-cliques in $G(n,1-p)$ w.h.p.  \cite[Lemma 2.2]{Kahle2014sharp}. Further, since $\alpha < \frac{1}{k-1} \leq \frac{2}{k}$, there exist $(k+1)$-cliques in $G(n,1-p)$ w.h.p. (see Theorem \ref{t:denrg} ). So there are maximal cliques of order $k$ and cliques of an order at least $(k+1)$. Thus, there are maximal cliques of different orders and so $I(n,p)$ is not unmixed. \\

Now let $\alpha \geq 1$. Again, by Theorem \ref{t:denrg} and 
\cite[Corollary 3.24]{Bordenave2016}, there are both tree components and isolated vertices in $G(n,1-p)$ w.h.p.. Thus there are maximal cliques of order $1$ and $2$ and so $I(n,p)$ is not unmixed. 

\item By Theorem \ref{t:denrg}, again $G(n,1-p)$ has no edges w.h.p and so $G(n,p)$ is complete w.h.p.. Hence minimal vertex covers are unique and so $I(n,p)$ is unmixed. 
\end{enumerate}
\end{proof}
%

%
%
%
%
%

%
\remove{
\begin{lemma} Let $G$ be any graph with $I(G)$ its edge ideal in $K[x_1, \ldots , x_n]$ and let $C_1, \ldots C_t$ be the induced subgraph on the connected components of $G$. Further let $K[C_1], \ldots, K[C_t]$ be the polynomials rings over $K$ on the vertices of $C_1, \ldots C_t$ respectively. Then we have $$\text{Krull Dimension} (K[x_1, \ldots x_n]/I(G)) = \sum_i \text{Krull Dimension}(K[C_i]/I(G))$$
\end{lemma}

\begin{proof}
Note that $\text{Krull Dimension} (K[x_1, \ldots x_n]/I(G)) = n-$ minimum size of a vertex cover of $G$. But the vertex cover of $G$ with minimum size is the disjoint union of the vertex covers of $C_i$s with minimum size. Now the result follows from the facts that $n= \sum_i |V(C_i)|$ and for all $i$ we have $\text{Krull Dimension} (K[C_i]/I(C_i)) = |V(C_i)|-$ minimum size of a vertex cover of $C_i$.
\end{proof}

\begin{lemma} Let $G$ be any graph with $I(G)$ its edge ideal in $K[x_1, \ldots , x_n]$. Then Krull Dimension of $K[x_1, \ldots x_n]/I(G)$ is vertex Lipschitz via maximum degree of a vertex.
\end{lemma}

\begin{proof}
Any vertex cover of $G\setminus x$ union $\{x\}$ gives a vertex cover of $G$; and any vertex cover of $G$ minus $\{x\}$ union all neighbours of $x$ gives a vertex cover of $G\setminus \{x\}$.
\end{proof}
}

\section*{Acknowledgements}
D.Y. was supported by DST INSPIRE Faculty award,  SERB-MATRICS grant and CPDA from the Indian Statistical Institute.  A.B was supported by DST INSPIRE Faculty award and CPDA from the Ramakrishna Mission Vivekananda Educational and Research Institute.  We are extremely grateful to Prof. Daniel Erman for his detailed comments and suggestions on earlier drafts as well as pointing out references \cite{deLoera2019average,booms2020heuristics}.  A.B. would like to thank Prof. B. V. Rao,  Prof. Huy Tai Ha and Prof. Giulio Caviglia for their valuable suggestions. A.B. would also like to thank Indian Statistical Institute, Bengaluru centre for the hospitality during his visit when this work was partially done.

\bibliographystyle{plain}
\bibliography{random_edge_ideals}

\end{document}